\date{Jan.~11, 2010 (revision)}
\newtheorem{dummy}{anything}[section]
\newtheorem{theorem}[dummy]{Theorem}
\newtheorem{lemma}[dummy]{Lemma}
\newtheorem{corollary}[dummy]{Corollary}
\theoremstyle{definition}
\newtheorem{definition}[dummy]{Definition}
  \newtheorem{example}[dummy]{Example}
  \newtheorem{remark}[dummy]{Remark}
\newcommand
{\eqncount}{\setcounter{equation}{\value{dummy}}%
\addtocounter{dummy}{1}}
\newcommand{\cA}{\mathcal A}
\newcommand{\cB}{\mathscr B}
\newcommand{\cE}{\mathcal E}
\newcommand{\cF}{\mathscr F}
\newcommand{\cG}{\mathscr G}
\newcommand{\bC}{\mathbf C}
\newcommand{\bD}{\mathbf D}
\newcommand{\bZ}{\mathbb Z}
\newcommand{\bP}{\mathbf P}
\newcommand{\bbQ}{\mathbb Q}
\DeclareMathOperator{\Hom}{Hom}
\DeclareMathOperator{\Irr}{Irr}
 \DeclareMathOperator{\Ext}{Ext}
 \DeclareMathOperator{\Ind}{Ind}
\DeclareMathOperator{\Res}{Res} 
\DeclareMathOperator{\Mor}{Mor} 
\DeclareMathOperator{\supp}{supp}
 \DeclareMathOperator{\ind}{Ind}
 \DeclareMathOperator{\Obs}{Obs}
 \DeclareMathOperator{\Lin}{Lin}
\newcommand{\cy}[1]{\bZ/{#1}}
\newcommand{\disjointunion}{\sqcup}
\newcommand{\vv}{\, | \,}
\def\G{\varGamma}
\DeclareMathOperator{\Is}{Iso}
\DeclareMathOperator\Mod{mod}
\newcommand{\leftexp}[2]{{\vphantom{#2}}^{ #1}{\hskip-1pt#2}}
\DeclareMathOperator{\Or}{Or}
\newcommand\RG{R\G}
\newcommand\ZG{\bZ\G}
\newcommand\OrG{\Or _{\cF}G}
\newcommand\un{\underline}
\newcommand\uR[1]{R[{#1}^{\, \textbf{?}\,}]}
\newcommand\uZ[1]{\mathbb{Z}[{#1}^{\, \textbf{?}\,}]}
\newcommand\uC[1]{\CC({#1}^{\textbf{?}};R)}
\newcommand\CC{\bC}
\newcommand\DD{\bD}
\newcommand\PP{\bP}
\newcommand{\nor}{\unlhd}
\def\Zphat{\widehat \bZ_p}
\begin{document}

\title{Acyclic Chain Complexes over the Orbit Category}
\author{Ian Hambleton}
\author{Erg\"un Yal\c c\i n}

\address{Department of Mathematics, McMaster University,
Hamilton, Ontario L8S 4K1, Canada}

\email{hambleton@mcmaster.ca }

\address{Department of Mathematics, Bilkent University,
06800 Bilkent, Ankara, Turkey}

\email{yalcine@fen.bilkent.edu.tr }

\thanks{Research partially supported by NSERC Discovery Grant A4000.
The first author would like  to thank the Max Planck Institut f\"ur Mathematik in Bonn for its hospitality and support while part of this work was done. 
The second author is partially supported by 
T\" UBA-GEB\. IP/2005-16.}

\begin{abstract}
Chain complexes of finitely generated free modules
over orbit categories provide natural algebraic models for finite
$G$-CW-complexes with prescribed isotropy. We prove a $p$-hypoelementary Dress induction theorem for $K$-theory over the orbit category, and use it to re-interpret some results of Oliver and Kropholler-Wall on acyclic complexes.
\end{abstract}

\maketitle

\section{Introduction}
\label{sect: introduction}

A good algebraic setting for studying actions of a group $G$ with
isotropy in a given family of subgroups $\cF$ is provided by the
category of $R$-modules  over the orbit category $\G_G=\OrG$, where
$R$ is a commutative ring with unit. This theory was established by
Bredon \cite{bredon2},  tom Dieck \cite{tomDieck2} and L\"uck
\cite{lueck3}, and further developed by many authors (see, for
example, Jackowski-McClure-Oliver \cite[\S
5]{jackowski-mcclure-oliver2}, Brady-Leary-Nucinkis
\cite{brady-leary-nucinkis1}, Symonds \cite{symonds3},
\cite{symonds2}).

The category of $\RG_G$-modules is an abelian category with $\Hom$ and
tensor product, and has enough projectives for standard homological
algebra. In this paper, we will use projective
chain complexes over the orbit category of a finite group to study acyclic
$G$-CW complexes. In Section 2 we give an orbit category version of an induction result of Dress  \cite{dress1}. In Sections \ref{sect: Oliver} and
\ref{sect: Acyclic} we re-interpret some results of Oliver
\cite{oliver0} and Kropholler-Wall \cite{kropholler-wall1} in terms
of algebra over the orbit category.

\section{Dress induction over the orbit category}\label{sect: Dress}
Let $G$ be a finite group and let $R= \Zphat$ or $R=\cy{p}$, for some prime $p$. We note that the
Krull-Schmidt theorem holds for finitely-generated $RG$-modules. Let $A(RG)$ denote the Grothendieck ring of isomorphism classes of finitely-generated $R$-torsion free
$RG$-modules,  with addition given by direct sums and product given by tensor product $\otimes_R$. By the Krull-Schmidt theorem, $A(RG)$ is  $\bZ$-torsion free.

Andreas Dress \cite[Theorem 7]{dress1} proved that  $A(RG)$  is rationally generated by induction from all the $p$-hypoelementary subgroups of $G$, and detected by restriction to the same collection of subgroups (see also Bouc \cite[Cor.~3.5.8]{bouc6} for an exposition). Recall that a subgroup $H \leq G$ is called $p$-\emph{hypoelementary} if
it has a normal $p$-subgroup $P \nor H$ such that $H/P$ is cyclic of order
 prime to $p$. 
We denote the class of $p$-hypoelementary group by
$ \cG_p^1 $. In this section we will give a version of the result of Dress for modules over the orbit category.

Let $\G_G$ denote the orbit category of $G$ with respect to  the family $\cF$ of $p$-subgroups in $G$.  Free $\RG_G$-modules are direct sums of the modules $\uR{G/Q}$, for $Q \in \cF$, where $$\uR{G/Q}(G/V) = R\Mor_G(G/V, G/Q),$$ 
and projectives are defined as direct summands of free modules. 
We will assume that the reader is somewhat familiar with modules over the orbit category (see \cite[\S 9]{lueck3}). 

In particular, we will need to use two pairs of adjoint functors
$(S_Q, I_Q)$ and $(E_Q, \Res_Q)$, defined for any object $G/Q \in \G_G$, which relate the category of right $\RG_G$-modules and the category of
right $R[N_G(Q)/Q]$-modules (see \cite[9.26-9.29]{lueck3}).  For any right $\RG_G$-module $M$, the \emph{restriction} functor is defined by
$\Res_Q(M) = M(Q)$, and the \emph{splitting} functor is given by
 $$S_Q(M) = M(Q)/M(Q)_s$$ where $M(Q)_s$ is the R-submodule generated by the images of all the $R$-homomorphisms $M(f)\colon M(K) \to M(Q)$ induced by $G$-maps $f\colon G/Q \to G/K$, with $Q <K \in \cF$. 
 For any right $R[N_G(Q)/Q]$-module $N$,
  the \emph{extension} functor 
  $$E_Q(N) = N \otimes_{R[N_G(Q)/Q]} \uR{G/Q}$$
  and the \emph{inclusion} functor is given by requiring $\Res_K(I_Q(N)) = 0$ unless $K$ and  $Q$ are conjugate, and $\Res_Q(I_Q(N)) = N$. 

The Grothendieck group of finitely-generated projective $\RG_G$-modules is denoted $K_0(\RG_G)$ (see \cite[\S 10]{lueck3} for the definition and properties of this $K_0$ functor). We remark that $K_0(\RG_G)$ is a Mackey functor under the natural operations of induction $\Ind_H^G$ and restriction $\Res_H^G$, with respect to subgroups $H \leq G$.

Let $R\cE_G$ denote the exact category of finitely-generated $R$-torsion free $\RG_G$-modules, of finite projective length over $\RG_G$, with exactness structure given by the short exact sequences of $\RG_G$-modules. 
\begin{example}\label{ex: projective length}
 Every $\RG_G$-module of the form $\uR{G/H}$, $H \leq G$,  admits a finite length projective resolution. This follows from the orbit category version of Rim's
theorem (see  \cite[Theorem 3.8]{hpy1}). However, the adjunction formula 
\cite[17.21]{lueck3} shows that, for example, the module $E_1(R) = I_1(R)$ does not have a finite length projective resolution if $G=\cy{p}$.
\end{example}

We note that $K_0(R\cE_G)$ is a ring under the operations of direct sum and tensor product $\otimes_R$, with unit $\un{R} = \uR{G/G}$ the constant $\RG_G$-module. Moreover,  $K_0(R\cE_G)$ also has the structure of a Mackey functor with respect to
$\Ind_H^G$ and $\Res_H^G$, and hence is a Green ring (via the product formulas of \cite[10.26]{lueck3}, and the observation that the diagonal functor $\Delta \colon \G_G  \to \G_G\times \G_G$ is admissible \cite[p.~203]{lueck3}). The natural map
$K_0(R\G_G) \to K_0(R\cE_G)$, sending $[P]\mapsto [P]$, is called the Cartan map.
\begin{lemma}[Grothendieck, Swan]\label{lem: green ring}
 The Cartan map 
$K_0(R\G_G) \xrightarrow{\approx} K_0(R\cE_G)$ is an isomorphism of Mackey functors.
\end{lemma}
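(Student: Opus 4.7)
The plan is to follow the classical Grothendieck--Swan Euler-characteristic argument, adapted to the orbit category. First I would construct an inverse to the Cartan map as follows. Since every $M \in R\cE_G$ admits by hypothesis a finite projective resolution
$$0 \to P_n \to P_{n-1} \to \cdots \to P_0 \to M \to 0$$
in $R\G_G$-modules, I set $\chi(M) = \sum_{i=0}^n (-1)^i [P_i] \in K_0(R\G_G)$. Because the category of $R\G_G$-modules is abelian with enough projectives (see \cite[\S 9]{lueck3}), the usual Schanuel lemma applies, and an induction on the length of the resolution shows that $\chi(M)$ is independent of the choice of resolution.

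Next I would verify additivity of $\chi$ on short exact sequences $0 \to M' \to M \to M'' \to 0$ in $R\cE_G$ by means of the Horseshoe Lemma, which assembles compatible finite projective resolutions of the three terms from chosen resolutions of $M'$ and $M''$. This upgrades $\chi$ to a group homomorphism $\chi \colon K_0(R\cE_G) \to K_0(R\G_G)$. By construction $\chi([P]) = [P]$ for any finitely generated projective $P$, and conversely the image of $\chi(M)$ under the Cartan map equals $\sum_{i}(-1)^i[P_i] = [M]$ in $K_0(R\cE_G)$ by the additivity relations coming from the short exact sequences extracted from the resolution. Hence $\chi$ is a two-sided inverse of the Cartan map.

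The remaining issue is compatibility with the Mackey-functor structure (and, more strongly, with the Green-ring structure via $\otimes_R$). For each subgroup $H \leq G$ the functors $\Res_H^G$ and $\Ind_H^G$ between $R\G_H$-modules and $R\G_G$-modules are exact and send finitely generated projectives to finitely generated projectives (cf.\ \cite[\S 10]{lueck3}), so they transport finite projective resolutions to finite projective resolutions and preserve $R$-torsion freeness. The same holds for the diagonal tensor product that gives $K_0$ its ring structure. It follows that $\chi$ commutes with $\Ind_H^G$, $\Res_H^G$, and $\otimes_R$, so the Cartan isomorphism respects the Mackey (Green) structure.

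The main technical obstacle I anticipate is confirming that the orbit-category versions of induction and restriction really do preserve both exactness and finite projective length: unlike the classical group-ring case, these functors pass through a comparison of the families $\cF \cap H$ and $\cF$ and involve the splitting/extension functors $(S_Q,I_Q)$ and $(E_Q,\Res_Q)$. Once this bookkeeping is in place, the Grothendieck--Swan inversion argument is formal and yields the claimed isomorphism of Mackey functors.
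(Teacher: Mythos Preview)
Your proposal is correct and takes essentially the same approach as the paper: the paper defines the inverse $\chi$ by the Euler characteristic of a finite projective resolution and appeals to Swan \cite[Thm.~1.1]{swan2} and Curtis--Reiner \cite[38.50]{curtis-reiner2} for the standard Grothendieck--Swan argument (your Schanuel/Horseshoe steps), while noting that the Cartan map is compatible with induction and restriction. Your write-up simply unpacks the details the paper leaves to these references; the only minor excess is that once the Cartan map is a group isomorphism commuting with $\Ind$ and $\Res$, its inverse $\chi$ automatically does too, so you need not separately verify that $\chi$ commutes with the Mackey operations.
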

\begin{proof}
If $M$ is an $\RG_G$-module and $\PP_* \to M$ is a projective resolution, we may define
$\chi\colon K_0(R\cE_G) \to K_0(R\G_G)$
by 
$$\chi(M)=\sum   (-1) ^i [P_i]\in 
K_0 (\RG_G). $$
The Cartan map $K_0(R\G_G) \to K_0(R\cE_G)$  is compatible with induction and restriction, and $\chi$ gives an inverse map as in Swan \cite[Thm.~1.1]{swan2}, or Curtis-Reiner  \cite[38.50]{curtis-reiner2}.
\end{proof}
\begin{lemma} $K_0(R\G_G)$ and 
$\widetilde K_0(R\G_G)$ are $\bZ$-torsion-free (for the orbit category with respect to any family $\cF$ of subgroups).
\end{lemma}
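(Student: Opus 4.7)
The plan is to use L\"uck's splitting theorem for finitely-generated projective modules over the orbit category, which reduces both statements to the classical torsion-freeness of $K_0$ for finite group rings. By \cite[Theorem~10.34]{lueck3}, every finitely-generated projective $\RG_G$-module $P$ decomposes canonically as
\[
P \;\cong\; \bigoplus_{(Q)} E_Q(S_Q(P)),
\]
where $(Q)$ runs over $G$-conjugacy classes of subgroups $Q \in \cF$, and each $S_Q(P)$ is a finitely-generated projective $\RW{Q}$-module, with $W_Q = N_G(Q)/Q$. Passing to Grothendieck groups yields a natural isomorphism
\[
K_0(\RG_G) \;\cong\; \bigoplus_{(Q)} K_0(\RW{Q}).
\]

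Next I would invoke the Krull-Schmidt theorem for finitely-generated $\RW{Q}$-modules, which holds for both coefficient rings under consideration ($\Zphat$ is a complete discrete valuation ring and $\cy{p}$ is a field). Krull-Schmidt forces each $K_0(\RW{Q})$ to be the free abelian group on the set of isomorphism classes of indecomposable projective $\RW{Q}$-modules; in particular it is $\bZ$-torsion-free. A direct sum of torsion-free abelian groups is torsion-free, so the first assertion follows.

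For the reduced group $\widetilde K_0(\RG_G)$, interpreted as the quotient of $K_0(\RG_G)$ by the subgroup generated by the free classes $[\uR{G/Q}]$ for $Q \in \cF$, I would note that $\uR{G/Q} = E_Q(\RW{Q})$ satisfies $S_Q(\uR{G/Q}) = \RW{Q}$ and $S_K(\uR{G/Q}) = 0$ for $K$ not conjugate to $Q$. This last vanishing is a direct check from the definition of $S_K$: if $K$ is subconjugate to $Q$ but not conjugate, any $G$-map $G/K \to G/Q$ factors as $G/K \to G/Q^g \to G/Q$ for some conjugate $Q^g > K$, so every element of $\uR{G/Q}(K)$ lies in the submodule $\uR{G/Q}(K)_s$. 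Consequently the splitting restricts to an isomorphism
\[
\widetilde K_0(\RG_G) \;\cong\; \bigoplus_{(Q)} \widetilde K_0(\RW{Q}),
\]
and each summand is torsion-free as a direct summand of the torsion-free group $K_0(\RW{Q})$.

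The main substantive input is L\"uck's splitting theorem, which is valid for any family $\cF$; after that the argument is a formal application of Krull-Schmidt, together with the small combinatorial check identifying the image of the free generators $[\uR{G/Q}]$ inside the decomposition.
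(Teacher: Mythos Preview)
Your proof is correct and follows essentially the same route as the paper: both use L\"uck's splitting isomorphism \cite[10.34]{lueck3} together with Krull--Schmidt to handle $K_0(\RG_G)$, and then reduce $\widetilde K_0(\RG_G)$ to the torsion-freeness of the individual $\widetilde K_0(\RW{Q})$. The only difference is cosmetic: the paper simply cites the split exact sequence $0 \to K_0^f(\RG_G) \to K_0(\RG_G) \to \widetilde K_0(\RG_G) \to 0$ from \cite[10.42]{lueck3} and concludes that $\widetilde K_0$ is a summand of a torsion-free group, whereas you re-derive the compatibility of the splitting with the free generators by computing $S_K(\uR{G/Q})$ directly. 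Your factorization argument for that computation is fine (any morphism $G/K \to G/Q$ with $K$ strictly subconjugate to $Q$ factors through a conjugate of $Q$ properly containing $K$, and conjugates stay in $\cF$), and the assertion that $\widetilde K_0(\RW{Q})$ is a direct summand of $K_0(\RW{Q})$ holds because $[\RW{Q}]$ is primitive in the free abelian group $K_0(\RW{Q})$ (the projective cover of the trivial module occurs with multiplicity one in the regular module).
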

\begin{proof}
There is a (split) short exact sequence (see \cite[10.42]{lueck3})
$$0\to K_0^f(R\G_G) \to K_0(R\G_G)\to \widetilde K_0(R\G_G)\to 0$$
where $K_0^f(R\G_G)$ denotes $K_0$ of the exact category of finitely-generated free $R\G_G$-modules. In addition, there is a natural isomorphism (see L\"uck \cite[10.34]{lueck3}):
 $$K_0(\RG_G) \cong \bigoplus_{[Q]\in\Is(\G_G)} K_0(R[N_G(Q)/Q])$$
 induced by the inverse functors $S=(S_Q)$ and $E=(E_Q)$.  Here $\Is(\G_G)$ denotes the isomorphism classes of objects in $\G_G$, or equivalently the $G$-conjugacy classes of subgroups $Q \in \cF$. 
 By the Krull-Schmidt theorem, all of the groups $K_0(R[N_G(Q)/Q])$ are $\bZ$-torsion free.
\end{proof}

Here is the main result of this section.
\begin{theorem}\label{thm: computable} 
Let $\G_G$ denote the orbit category of a finite group $G$ with respect to the family of $p$-subgroups, for some prime $p$, and let $R = \Zphat$ or $R = \cy{p}$. Then $K_0 (\RG _G )\otimes \bbQ$ and $\widetilde K_0 (\RG _G )\otimes \bbQ$ are computable from the $p$-hypoelementary subgroups of $G$.
\end{theorem}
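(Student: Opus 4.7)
The plan is to apply Dress's induction formalism for Green functors through a Green functor morphism from the Burnside Mackey functor to $K_0(R\cE_{-})$. By the Cartan isomorphism of Lemma~\ref{lem: green ring}, we may identify $K_0(\RG_G)\otimes\bbQ$ with the Green ring $K_0(R\cE_G)\otimes\bbQ$ and argue there.

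For each subgroup $K\le G$ (not necessarily in $\cF$), the $\RG_G$-module $\uR{G/K}$, taking the value $R[(G/K)^Q]$ at $G/Q\in\G_G$, is $R$-torsion free and has finite projective length over $\RG_G$ by the orbit-category Rim's theorem invoked in Example~\ref{ex: projective length}; it therefore defines a class in $K_0(R\cE_G)$. I would define $\varphi\colon\Omega\to K_0(R\cE_{-})$ on the Burnside Mackey functor by $[G/K]\mapsto[\uR{G/K}]$ and verify the Green functor compatibilities: (i) the ring identity $\uR{G/H}\otimes_R\uR{G/K}=\bigoplus_g\uR{G/(H\cap{}^gK)}$, obtained by applying the pointwise formula $(X\times Y)^Q=X^Q\times Y^Q$ to the Mackey product decomposition $G/H\times G/K=\bigsqcup_g G/(H\cap{}^gK)$; (ii) compatibility with restriction, via the double-coset decomposition of $\Res_L^G(G/K)$; (iii) the induction identity $\Ind_L^G(\uR{L/K})=\uR{G/K}$, from the orbit-category adjunctions of \cite[\S 17]{lueck3}; (iv) $\varphi([G/G])=[\un R]=1$.

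Applying Dress's induction theorem to the Burnside Mackey functor (a standard variant of \cite{dress1}; cf.\ \cite[Ch.~5]{bouc6}), one obtains an expression $1=\sum_i q_i\Ind_{H_i}^G(x_i)$ in $\Omega(G)\otimes\bbQ$ with $H_i$ running over $p$-hypoelementary subgroups of $G$. Applying $\varphi$ gives the same expression in $K_0(R\cE_G)\otimes\bbQ$, and the Frobenius reciprocity manipulation
$$a \;=\; 1\cdot a \;=\; \sum_i q_i\Ind_{H_i}^G\bigl(\varphi(x_i)\cdot\Res_{H_i}^G(a)\bigr)$$
simultaneously delivers generation by induction and detection by restriction from $p$-hypoelementary subgroups, so $K_0(\RG_G)\otimes\bbQ$ is computable. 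For the reduced group, the short exact sequence of Mackey functors $0\to K_0^f(\RG_{-})\to K_0(\RG_{-})\to\widetilde K_0(\RG_{-})\to 0$ splits after tensoring with $\bbQ$ by the semisimplicity of the rational Mackey algebra, so $\widetilde K_0(\RG_G)\otimes\bbQ$ is a Mackey-functor direct summand of the computable $K_0(\RG_G)\otimes\bbQ$ and is itself computable.

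The main obstacle will be verifying the induction identity $\Ind_L^G(\uR{L/K})=\uR{G/K}$ in step~(iii), which requires unpacking the orbit-category induction as the left adjoint of the restriction functor $\G_L\to\G_G$ and checking agreement on representable generators. The ring and restriction compatibilities are direct pointwise manipulations of $Q$-fixed point sets, and once $\varphi$ is in hand the transfer of Dress's theorem is a routine Green-functor argument.
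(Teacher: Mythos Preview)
Your strategy has a genuine gap at the step where you invoke Dress induction for the Burnside functor. The claimed equation $1=\sum_i q_i\,\Ind_{H_i}^G(x_i)$ in $\Omega(G)\otimes\bbQ$ with all $H_i$ $p$-hypoelementary is \emph{false} whenever $G$ itself is not $p$-hypoelementary. Under the mark homomorphism every element induced from a proper subgroup $H<G$ has vanishing $G$-mark (since $(G/K)^G=\emptyset$ for $K<G$), whereas $1=[G/G]$ has $G$-mark equal to $1$; so $[G/G]$ does not lie in the rational span of inductions from proper subgroups. Dress's $p$-hypoelementary theorem \cite[Theorem~7]{dress1} applies to the representation ring $A(RG)$ for $R=\Zphat$ or $\cy p$, not to $\Omega(G)$: the whole point is that the linearization $\Omega(G)\to A(RG)$ has a large kernel (Conlon's theorem, Lemma~\ref{lem: Oliver}), and $[G/G]$ becomes a combination of induced classes only \emph{after} passing to $A(RG)$.

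What is actually required is to show that the \emph{image} of your $\varphi$---the Burnside quotient Green ring $\cA_K\subset K_0(R\cE_G)$ generated by the classes $[\uR{G/H}]$---is rationally generated by $p$-hypoelementary induction. This is not a formality: Example~\ref{ex:counter-example} shows that $\uR{X}$ and $\uR{Y}$ can be non-isomorphic $\RG_G$-modules even when $|X^H|=|Y^H|$ for every $p$-hypoelementary $H$, so one cannot simply transport Conlon's theorem through $\varphi$ at the module level. The paper supplies this missing content by using L\"uck's splitting $K_0(\RG_G)\cong\bigoplus_{[Q]}K_0(R[W_G(Q)])$, applying Dress's classical theorem inside each factor $A(R[W_G(Q)])$, and running an induction on the support $\supp(a)=\{[K]:S_K(a)\neq 0\}$ to assemble these local formulas into an orbit-category formula~(\ref{dress_formula}). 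Your Green-functor packaging is the right wrapper, but the substantive step---proving that $\cA_K\otimes\bbQ$ is $p$-hypoelementary generated---still has to be carried out.
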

Since $\widetilde K_0(\RG_G)$ is $\bZ$-torsion free, we have the immediate consequence:
\begin{corollary}\label{cor: hypo detection} $K_0^f(R\G_G)$, $K_0 (\RG _G )$ and
$\widetilde K_0 (\RG _G )$ are detected by restriction to the sum of $\widetilde K_0 (\RG _H )$, for all  $H \in\cG_p^1$.
\end{corollary}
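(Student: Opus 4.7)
The plan is to deduce the corollary as an \emph{integral} statement from the rational computability result, Theorem~\ref{thm: computable}, by exploiting the $\bZ$-torsion freeness established in the previous lemma. All three groups are dispatched in parallel.

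First, I would unpack Theorem~\ref{thm: computable}. In Dress's framework, saying that $\widetilde K_0(R\G_G)\otimes\bbQ$ is computable from $\cG_p^1$ is equivalent to the statement that the rational restriction map
\[
\widetilde K_0(R\G_G)\otimes \bbQ \;\longrightarrow\; \bigoplus_{H\in \cG_p^1}\widetilde K_0(R\G_H)\otimes \bbQ
\]
is injective, and similarly for $K_0(R\G_G)\otimes\bbQ$. This is the precise consequence of the theorem that I intend to use.

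Next, by the preceding lemma both $K_0(R\G_G)$ and $\widetilde K_0(R\G_G)$ are $\bZ$-torsion free, so the natural maps $M\hookrightarrow M\otimes\bbQ$ are injective for $M\in\{K_0(R\G_G),\widetilde K_0(R\G_G)\}$. Since the integral restriction map factors through the rational one, composing with the injection into the rationalization immediately upgrades the rational injectivity above to an integral statement: the restriction map to the direct sum over $H\in \cG_p^1$ is injective integrally. This is "detection" for $K_0(R\G_G)$ and for $\widetilde K_0(R\G_G)$.

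Finally, to handle $K_0^f(R\G_G)$, I would invoke the split short exact sequence already cited in the proof of the previous lemma,
\[
0\to K_0^f(R\G_G) \to K_0(R\G_G) \to \widetilde K_0(R\G_G)\to 0,
\]
which exhibits $K_0^f(R\G_G)$ as a direct summand of $K_0(R\G_G)$. Integral detection for $K_0$ then restricts to integral detection for $K_0^f$. I do not anticipate a genuine obstacle here — all the real work is packed into Theorem~\ref{thm: computable}. The only subtle point is terminological: one must be careful that "computable from $\cG_p^1$" is taken in the Dress sense, which delivers rational injectivity of restriction (not merely rational surjectivity of induction); without this, the $\bZ$-torsion-freeness argument would not close.
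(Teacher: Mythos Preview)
Your argument is correct and is exactly the approach the paper takes: the paper's entire justification is the single sentence ``Since $\widetilde K_0(\RG_G)$ is $\bZ$-torsion free, we have the immediate consequence,'' and your proposal simply unpacks that sentence, invoking Theorem~\ref{thm: computable} for rational detection and the torsion-freeness lemma to pass to the integral statement. Your handling of $K_0^f$ via the split exact sequence and your caveat about the Dress meaning of ``computable'' are appropriate elaborations, but there is no difference in strategy.
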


For $p$ and $q$ primes, let $\cG_p^q$ denote the class of  finite groups which have a normal subgroup $H \in \cG_p^1$, with $q$-power order quotient group. 
Let $\cG_p = \bigcup_q \cG_p^q$  (see Dress \cite[\S 9]{dress1} and Oliver \cite{oliver0}). 
\begin{corollary} \label{cor: hypo computation} $K_0^f(R\G_G)$, $K_0 (\RG _G )$ and
$\widetilde K_0 (\RG _G )$ are computable by induction or restriction from the family of subgroups in $\cG_p$.
\end{corollary}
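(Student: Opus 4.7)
The plan is to split the statement into a detection part and a generation part. Detection from $\cG_p$-subgroups is immediate from Corollary~\ref{cor: hypo detection}, since $\cG_p^1 \subseteq \cG_p$. The remaining content is the surjectivity of the induction map
$$
\bigoplus_{\substack{H\leq G\\ H\in\cG_p}} \Ind_H^G \colon \bigoplus K_0(\RG_H) \longrightarrow K_0(\RG_G),
$$
and the analogous statements for $K_0^f$ and $\widetilde K_0$.

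My approach is to exploit the Green-functor structure on $K_0(R\cE_?)$ together with the identification $K_0(\RG_?) \cong K_0(R\cE_?)$ from Lemma~\ref{lem: green ring}. Via the Cartan isomorphism, $K_0(\RG_?)$ becomes a module over the Green functor $K_0(R\cE_?)$, so Frobenius reciprocity reduces surjectivity of induction at the level of modules to producing an expression
$$
\un{R} \;=\; \sum_{H\in\cG_p} \Ind_H^G(x_H) \qquad \text{in } K_0(R\cE_G)
$$
for suitable $x_H \in K_0(R\cE_H)$, where $\un{R}$ is the unit.

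To produce such an expression, I would pass through the L\"uck splitting
$$
K_0(\RG_G) \;\cong\; \bigoplus_{[Q]\in\Is(\G_G)} K_0(R[N_G(Q)/Q])
$$
used in the torsion-freeness argument above, and apply Dress's classical integral induction theorem \cite{dress1} separately to each Weyl group $W_Q = N_G(Q)/Q$. A generator of $K_0(R[W_Q])$ coming from a subgroup $\bar K \in \cG_p$ of $W_Q$ is then pulled back to the preimage $K \leq N_G(Q) \leq G$. A short group-theoretic check shows $K \in \cG_p$: if $\bar H \trianglelefteq \bar K$ is $p$-hypoelementary with $\bar K/\bar H$ of $q$-power order, then the preimage $H \trianglelefteq K$ of $\bar H$ is an extension of a $p$-hypoelementary group by the normal $p$-subgroup $Q$, hence is itself $p$-hypoelementary; and $K/H \cong \bar K/\bar H$ has $q$-power order, so $K \in \cG_p^q \subseteq \cG_p$.

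The main obstacle will be matching the induction maps across the L\"uck splitting: one must check that the composition of extension $E_Q$ with ordinary induction $\Ind_{N_G(Q)}^G$ on orbit-category modules corresponds, under the splitting, to classical induction in the summands $K_0(R[W_Q])$, and that this composition agrees with $\Ind_K^G$ for the preimage subgroup $K$ containing $Q$ as a normal subgroup. This can be verified from the adjunction $(E_Q,\Res_Q)$ of \cite[\S 9]{lueck3} together with the Mackey formula \cite[10.26]{lueck3}. Once the conclusion holds for $K_0(\RG_G)$, the corresponding statements for $K_0^f(\RG_G)$ and $\widetilde K_0(\RG_G)$ follow from the natural split short exact sequence used in the torsion-freeness argument, as that sequence is compatible with induction and restriction.
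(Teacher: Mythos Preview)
Your detection half is fine and matches the paper. For generation, however, the paper's proof is a one-liner that you are missing: once Corollary~\ref{cor: hypo detection} gives $\cG_p^1$-detection (equivalently, $\cG_p^1$-projectivity in Dress's sense) for the Green modules $K_0^f$, $K_0$, and $\widetilde K_0$, the general Dress induction machinery \cite[p.~207]{dress1}, \cite[3.3]{htw2007} immediately yields integral computability from the hyper-closure $\bigcup_q \text{hyper}_q\text{-}\cG_p^1 = \cG_p$. No further computation is needed.

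Your alternative route---expressing $\un{R}$ as a sum of inductions via the L\"uck splitting and classical Dress on each $K_0(R[W_Q])$---can be made to work, but it is much more laborious than you indicate, and it effectively re-derives the content of Theorem~\ref{thm: computable} rather than using its corollary. The ``main obstacle'' you flag is real: under the splitting, $\Ind_K^G$ does not simply land in the single summand $K_0(R[W_P])$; it has components at all $[V]$ with $V$ subconjugate in $K$ (see diagram~(\ref{detection1}) and the formula for $F_*$). Hence writing the $[P]$-component of $\un{R}$ as a sum of classical inductions only matches the $[P]$-component of $\sum \Ind_{K}^G(\cdots)$; the difference has strictly smaller support, and you must iterate---exactly the support-induction argument carried out in the proof of Theorem~\ref{thm: computable}. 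Your sentence ``this can be verified from the adjunction together with the Mackey formula'' skips over this entire descent. So your plan is not wrong, but it reproves the hard theorem instead of invoking the already-established detection result plus the standard Dress passage from $\cD$-projectivity to hyper-$\cD$-computability.
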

\begin{proof}  Note that $\cG_p^q = hyper_q\text{-}\cG_p^1$ in the terminology of Dress, so the result follows from Corollary \ref{cor: hypo detection} and Dress induction \cite[p.~207]{dress1},  \cite[3.3]{htw2007}.
\end{proof}

\begin{proof}[The proof of Theorem \ref{thm: computable}]
The Burnside quotient Green ring $\cA_K$ of $K_0(R\cE_G)$ is isomorphic to the subring generated by the modules $\uR{G/H}$, for all $H\leq G$ (see \cite[Remark 2.4]{htw2007}). By Lemma \ref{lem: green ring} it follows that $\cA_K$ is also the Burnside quotient Green ring of the Mackey functor $K_0(\RG_G)$. Since $\widetilde K_0(\RG_G)$ is a quotient Mackey functor of $K_0(\RG_G)$, it is also a Green module over $\cA_K$ (see \cite[\S 2D]{htw2007}). By Dress induction \cite{dress2},  it suffices to show that $\cA_K\otimes \bbQ$ is generated by induction from the family of $p$-hypoelementary subgroups of $G$.

For each subgroup $H \leq G$, there is a covariant functor $F\colon \G_H \to \G_G$ of orbit categories with respect to $\cF$, such that $\Ind_F = \Ind_H^G$ and $\Res_F = \Res_H^G$ on $K$-theory. 
By  \cite[10.34]{lueck3} there is a commutative diagram
 \eqncount
\begin{equation}\label{detection1}
\vcenter{ \xymatrix@C+15pt@R+5pt{K_0(R\G_H)\ar[r]^{\Ind^G_H}& K_0(R\G_G) \ar[d]^{S(R\G_G)}_{\approx} \cr
\bigoplus\limits_{[V]\in \Is(\G_H)} K_0(R[N_H(V)/V]) \ar[r]^(0.5){F_*}\ar[u]^{E(R\G_H)}_{\approx}&\ \bigoplus\limits_{[Q]\in \Is(\G_G)} K_0(R[N_G(Q)/Q])}}
 \end{equation}
 where the  vertical maps are the splitting isomorphisms, and the lower horizontal map $F_*$ is the sum of the induction maps corresponding to the subgroups $N_H(V) \subset N_G(V)$, for $V \leq H$, and $V \in \cF$ (see \cite[10.12]{lueck3}). There is a similar diagram for $\Res_H^G$ and the contravariant map $F^*$ using \cite[10.15]{lueck3}, but the formula for $F^*$ is more complicated. The functors $E$ and $S$ are inverse pairs of natural equivalences, and we have the formulas
 $$F_* = S(R\G_G) \circ \Ind_H^G\circ E(R\G_H)$$
 and
 $$F^* = S(R\G_H) \circ \Res_H^G\circ E(R\G_G)$$
for the induced maps in diagram (\ref{detection1}). The component of $F_*$ at $[Q]\in \Is(\G_G)$ will be denoted $p_QF_*$, and similarly $p_VF^*$ will denote the component of $F^*$ at $[V] \in \Is(\G_H)$.

We wish to show that there exist rational numbers $ \{r_H\vv H \in \cG_p^1\}$ such that 
\eqncount
\begin{equation}\label{dress_formula}
a = \sum_{H \in \cG_p^1} r_H \Ind_H^G(\Res_H^G(a))
\end{equation}
for any $a\in K_0(R\G_G)$. This is equivalent to the statement that $\cA_K\otimes \bbQ$ is generated by induction from the family of $p$-hypoelementary subgroups of $G$.

We will establish formula (\ref{dress_formula}) by induction on the support 
$$\supp(a):= \{[K] \in \Is(\G_G) \vv S_K(a) \neq 0\}$$
 of an element $a \in  K_0(R[N_G(Q)/Q])$, where the support sets are partially ordered by conjugation-inclusion. 
 Since $E(\RG_G)$ is an isomorphism, we may assume that $a = E_Q(a_Q)$, for some $a_Q \in K_0(R[N_G(Q)/Q])$. Let us also assume that formula (\ref{dress_formula}) holds for all elements $b$ with $\supp(b) < \supp(a) = \{ K \leq_G Q\}$.

From the expressions above for $F_*$ and $F^*$ we have the relation
\eqncount
\begin{equation}\label{map_relation}
S(R\G_G)\left (\Ind_H^G(\Res_H^G(a))\right ) = F_*(F^*(S(R\G_G)(a))) = F_*(F^*(a_Q))
\end{equation}
so we need to compute $p_KF_*(F^*(a_Q))$, for all subgroups $K \in \cF$.

First, we compute $p_QF_*(F^*(a_Q))$, for  $a_Q  \in K_0(R[N_G(Q)/Q])$. By \cite[10.12]{lueck3}, the only non-zero components of $p_QF_*$ are given by the images of
$$\Ind_{W_H(K)}^{W_G(Q)} = \left (S_Q\circ \ind_F\circ E_K \right )_*$$
corresponding to the objects $H/K$ in $\G_H$ such that $G/Q= F(H/K)$ in $\G_G$. In other words, we need to consider only the subgroups $K \leq H$ such that $K$ is a $G$-conjugate of $Q$. Without loss of generality, we may assume that $Q \leq H$ since  $p_QF_*(F^*(a_Q)) = 0$ unless $Q$ is conjugate to a subgroup of $H$.

Therefore, we only need to consider the components $p_KF^*$ of $F^*$ which have the form
$$M \mapsto M \otimes_{R[W_G(Q)]} R\Hom_G(F(H/K), G/Q)$$
where $G/Q=F(H/K)$ and $M$ is a right $R[W_G(Q)]$-module, as given in \cite[10.15]{lueck3}. We are using the formula
$$R\Irr(H/K, G/Q) = S_K\left (R\Hom_G(F(?), G/Q)\right ) = R\Hom_G(F(H/K), G/Q)\ .$$
The right-hand side is a right 
$R[W_G(K)]$-module through the natural action of $R[W_H(K)]$ on $H/K$. But since $\Hom_G(F(H/K), G/Q) = W_G(Q)$ whenever $F(H/K) = G/Q$, each of these components of $F^*$ is just the usual restriction
$ \Res_{W_H(K)}^{W_G(K)}$ composed with a conjugation-induced isomorphism $W_G(K) \cong W_G(Q)$.

It follows that  $p_QF_*( F^*(a_Q))$ is a sum of terms indexed by the $H$-conjugacy classes of subgroups $K \leq H$ such that $K$ is $G$-conjugate to $Q$. 
We have the formula
\eqncount
\begin{equation}\label{F_composite}
p_QF_*(F^*(a_Q)) =\sum_{\{K\leq H,\,  K^g = Q\}_H}\Ind_{W_{H^g}(Q)}^{W_G(Q)} \left (  \Res_{W_{H^g}(Q)}^{W_G(Q)}(a_Q)\right ),
\end{equation}
where each term in the sum is obtained by (i) choosing an $H$-conjugacy class representative $K \leq H$, and then (ii) picking an element $g \in G$ with $K^g=Q$.

Note that the individual terms on the right-hand side of formula (\ref{F_composite}) are independent of the choices made: $K^{g_1} = K^{g_2} = Q$ implies that $W_{H^{g_1}}(Q)$ and $W_{H^{g_2}}(Q)$ are conjugate in $W_{G}(Q)$, and hence the composite $\Ind \circ \Res$ does not change.
Let $$n_{H,Q} = |\{K\leq H,\,  K^g = Q\}_H|$$ denote the number of terms in the sum (\ref{F_composite}). Alternately, $n_{H,Q}$ is the number of $N_G(Q)$-orbits in the set $(G/H)^Q$.

Similarly, the definitions of $F_*$ and $F^*$  imply that $p_KF_*(F^*(a_Q)) = 0$, unless $K\leq H$ is $G$-conjugate to a subgroup of $Q$. Therefore
$$\supp\left (E(\RG_G) (F_*(F^*(a_Q))\right ) \subseteq \supp(E_Q(a_Q)) = \supp(a).$$

By the Dress hypoelementary induction  theorem  \cite[Theorem 7]{dress1},  \cite[Cor.~3.5.8]{bouc6},  there exist rational numbers $\{t_H\vv H \in \cG_p^1\}$, such that every element $u \in A(R[N_G(Q)/Q])$ satisfies the equation
   \eqncount
\begin{equation}\label{dress:detection}
u = \sum_{H \in \cG_p^1} t_H \Ind_{W_H(Q)}^{W_G(Q)} \left (  \Res_{W_H(Q)}^{W_G(Q)}(u)\right )
\end{equation}
  where $t_H=0$ unless $Q\leq H$,   and $W_G(Q) = N_G(Q)/Q$ as usual.  We will only need the induction result for elements in the subgroup $K_0(R[N_G(Q)/Q]) \subset A(R[N_G(Q)/Q])$.  The proof shows that the general formula follows from the one for $u = [R]$, the unit in the  Dress ring (compare \cite[Theorem 3.10]{htw2007}).  We observe that the Dress formula only uses the $p$-hypoelementary subgroups of $N_G(Q)/Q$, and since $Q \in \cF$ any such subgroup has the form $H/Q$, where $Q\triangleleft H$ and $H \in \cG_p^1$. 
  
  Moreover, we can assume that the coefficients $\{t_H\}$ in (\ref{dress:detection}) are invariant under conjugation, meaning that $t_{H^g} = t_H$ for all $g\in G$. This follows by starting with the Dress induction formula for the unit $[R] \in A(R[G])$,  where the inductions and restrictions from conjugate subgroups are equal, and then obtaining the formula for $A(R[N_G(Q)/Q])$ by restriction to $R[N_G(Q)]$, followed by a generalized restriction to $R[N_G(Q)/Q]$ in the sense of 
  \cite[1.A.8]{htw3}.

We now define 
$$b = a - \sum_{H \in \cG_p^1}  \frac{t_H}{n_{H,Q}} \Ind_H^G(\Res_H^G(a))$$
for
 $a= E_Q(a_Q)\in K_0(R\G_G)$, and note that
$$  S(R\G_G) \Bigg ( \sum_{H \in \cG_p^1}  \frac{t_H}{n_{H,Q}} \Ind_H^G(\Res_H^G(a))\Bigg ) = \sum_{H \in \cG_p^1} \frac{t_H}{n_{H,Q}} F_*(F^*(S(R\G_G)(a))) $$
by formula (\ref{map_relation}). However, 
by formulas (\ref{F_composite}) and (\ref{dress:detection}) applied to $u=a_Q$,  we have 

$$S_Q(b) = S_Q(a) - \sum_{H \in \cG_p^1} \frac{t_H}{n_{H,Q}} p_QF_*(F^*(S(R\G_G)(a))) = 0, $$
and hence $\supp(b) < \supp(a)$. By our inductive assumption, there exist rational numbers $\{z_K\}$ such that 
$$b = \sum_{K \in \cG_p^1}   z_K \Ind_K^G(\Res_K^G(b)). $$
By substituting  the formula defining $b$ into this expression, we obtain terms of the form
$$(\Ind_K^G \circ \Res_K^G\circ \Ind_H^G\circ \Res_H^G)(a)$$
for $p$-hypoelementary subgroups $H$ and $K$.  
However, we can use the Mackey double coset formula to express $\Res_K^G\circ \Ind_H^G$ as a sum of terms of the form
 $$\Ind_{\, \leftexp{g}{H}\cap K}^K\circ\,  c_g\circ  \Res_{H\cap K^g}^H. $$
 Since these terms will be applied to $\Res_H(a)$, and conjugation acts as the identity on $K_0(R\G_G)$, the internal conjugations can be omitted. 
We have now obtained the desired result
$$a = \sum_{H \in \cG_p^1} r_H \Ind_H^G(\Res_H^G(a)),$$
for any $a\in K_0(R\G_G)\cong K_0(R\cE_G)$. 
 Note that when this formula is applied to an element in the Burnside quotient Green ring $\cA_K$, it says that $\cA_K$ is rationally generated by induction from the $p$-hypoelementary subgroups of $G$.
\end{proof}

\section{Oliver's actions on finite acyclic complexes}
\label{sect: Oliver}

In this section, let $R= \Zphat$ or $R=\cy{p}$, for some prime $p$. We prove a result about the finiteness obstruction
$\widetilde\sigma(\CC) \in \widetilde K_0(\RG_G)$
of  a chain complex $\CC$ over the orbit category (with respect to the family of $p$-subgroups of $G$), which is weakly homology equivalent to a finite
projective chain complex. This follows from a more direct result
about modules over the orbit category which have finite projective
resolutions. As an application of these observations, we give an
alternative approach to R.~Oliver's constructions of finite mod-$p$
acyclic complexes.

Given a finite 
$G$-CW-complex $X$, there is an associated finite  cellular chain complex
$$ \uC{X} \colon\quad  0 \to \uR{X_n}\to \cdots  \to \uR{X_1}  \to \uR{X_0}
\to 0 $$ of $\RG _G$-modules, where $X_i$ denotes the set of
$i$-cells in $X$. An $\RG_G$-module of the form $\uR{G/H}$ is not
projective in general, but  it has always a finite projective
resolution (see Example \ref{ex: projective length}).  
 Recall that a \emph{weak homology equivalence} between chain complexes over $\RG_G$ is a chain map  inducing an isomorphism on homology (see \cite[\S 11]{lueck3}).
\begin{lemma}\label{weak homology} The complex $\uC{X}$
is weakly homology equivalent to a finite projective complex $\PP$.
\end{lemma}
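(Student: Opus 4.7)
The strategy is to construct $\PP$ as the total complex of a bounded Cartan--Eilenberg resolution of $\uC{X}$ by finitely generated projective $\RG_G$-modules, and then to verify that the augmentation map is a weak homology equivalence by evaluating objectwise in $\G_G$.

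First, I would observe that every module of the form $\uR{G/H}$ with $H \in \cF$ admits a finite projective resolution by finitely generated projective $\RG_G$-modules: this is the content of the orbit-category Rim theorem cited in Example~\ref{ex: projective length}. Since $G$ is finite, there are only finitely many $G$-conjugacy classes of subgroups in $\cF$, so there is a uniform upper bound $d$ on the projective dimensions of all such modules. Because $X$ is a finite $G$-CW-complex, each $\uR{X_i}$ is a finite direct sum of modules $\uR{G/H}$ with $H \in \cF$, and hence also has projective dimension at most $d$ and admits a finitely generated projective resolution of that length.

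Next, I would apply the standard Cartan--Eilenberg construction in the abelian category of right $\RG_G$-modules (which has enough projectives) to produce a double complex $P_{\ast,\ast}$ of finitely generated projective modules with an augmentation to $\uC{X}$ such that each column $P_{i,\ast} \to \uR{X_i}$ is a finite projective resolution compatible with the horizontal differentials coming from $\uC{X}$. Because $\uC{X}$ has only finitely many nonzero terms and each column has length at most $d$, the double complex $P_{\ast,\ast}$ is bounded in both directions, so its total complex $\PP := \mathrm{Tot}(P_{\ast,\ast})$ is a finite complex of finitely generated projective $\RG_G$-modules.

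Finally, to check that the induced augmentation $\PP \to \uC{X}$ is a weak homology equivalence, I would evaluate at an arbitrary object $G/K \in \G_G$: weak homology equivalence of $\RG_G$-modules is detected objectwise, and after evaluation the map becomes the totalization augmentation of a bounded double complex of $R$-modules whose columns are quasi-isomorphisms onto the corresponding terms of $\uC{X}(G/K)$. A standard spectral sequence argument for the double complex then shows that this totalization is itself a quasi-isomorphism. I do not foresee a real obstacle; the only point to keep in mind is that finite generation can be preserved throughout the Cartan--Eilenberg construction, which is ensured by Example~\ref{ex: projective length} together with the finiteness of $X$.
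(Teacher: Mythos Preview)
Your approach is correct and essentially equivalent to the paper's: the paper inducts on the skeletal filtration of $X$, choosing finite projective resolutions $\PP^{(k)}\to\uR{X_k}$ and assembling them into a weak homology equivalence $\PP=\bigoplus_k \PP^{(k)}\to\uC{X}$ via \cite[11.2(c)]{lueck3}, which is exactly the total complex of a Cartan--Eilenberg resolution built columnwise; your spectral-sequence verification is the global counterpart to their inductive step.

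One small slip worth correcting: you assert that each $\uR{X_i}$ is a direct sum of modules $\uR{G/H}$ with $H\in\cF$, but $X$ is an arbitrary finite $G$-CW complex and its cell stabilizers need not lie in $\cF$. This does not affect the argument, because Example~\ref{ex: projective length} (which you cite) gives finite projective dimension for $\uR{G/H}$ for \emph{all} $H\leq G$, not just $H\in\cF$, and that is precisely what is required.
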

\begin{proof} For each $k\geq 0$, we have the $k$-skeleton $X^{(k)}$ of $X$, which is a $G$-CW subcomplex, and a short exact sequence
$$0 \to \uC{X^{(k-1)}} \to \uC{X^{(k)}} \to \DD^{(k)} \to 0$$
of $\RG_G$-module chain complexes, for $k \geq 1$. The relative cellular complex $$ \DD^{(k)} =\uC{(X^{(k)}, X^{(k-1)})} = \uR{X_{k}},$$
 where we regard the module $ \uR{X_{k}}$ as a chain complex concentrated in degree $k$.
For each $k\geq 0$, we pick a finite projective resolution $f^{(k)}\colon \PP^{(k)} \to \uR{X_{k}}$, and regard  $\PP^{(k)}$ as a chain complex starting in degree $k$. The map $f^{(k)}$ then gives a weak homology equivalence
$ \PP^{(k)} \to \DD^{(k)}$. By induction on $k$ and standard homological algebra (see \cite[11.2(c)]{lueck3}), we obtain a weak homology equivalence $f\colon \PP \to \uC{X}$ with $\PP = \bigoplus \PP^{(k)}$.
\end{proof}

The obstruction for replacing a weak homology equivalence $f\colon \PP \to \uC{X}$ with a finite free chain complex (in
the same chain homotopy type) is an element 
$$\widetilde \sigma(X) \in \widetilde
K_0 (\RG_G)$$
in the projective class group,  defined as the image of the Euler characteristic
$$\sigma ( X) =\sum  (-1) ^i [P_i]\in 
K_0 (\RG_G). $$
Note that this obstruction is defined for any finite $G$-CW complex $X$, so
it is defined for finite $G$-sets as well (considered as $G$-CW complexes of dimension zero).

By uniqueness of projective resolutions (up to chain homotopy equivalence), the Euler characteristic $\sigma(X)$, and hence the finiteness obstruction $\widetilde \sigma(X)$,  is independent of the choice of projective complex $\PP$ weakly homology equivalent to $\uC{X}$. In particular, the proof of Lemma \ref{weak homology} shows that 
\eqncount
\begin{equation}\label{G set formula}
\sigma (X)=\sum  (-1) ^k \sigma (X_k) \in  K_0 (\RG_G),
\end{equation}
where $\sigma(X_k)$ is (by definition) the Euler characteristic of any finite projective resolution 
$\PP^{(k)}$ for the module $\uR{X_k}$.
The obstruction $\widetilde\sigma(X) = 0$ if and only if there is a finite free chain complex with a weak homology equivalence to  $\uC{X}$.

We now recall a description of the Burnside ring
$B(G)$, due to tom Dieck \cite[p.~239]{tomDieck1}. In this
description, $B(G)$ is the set of equivalence classes of finite
$G$-CW complexes,  with $X \sim Y$ if and only if $\chi(X^H) =
\chi(Y^H)$ for all subgroups $H \leq G$. The addition is disjoint union and the multiplication is Cartesian product.  The additive identity is
the empty set, and the additive inverse $-[X]$ is represented by $Z
\times X $, for any finite complex $Z$ with $\chi(Z) = -1$ and
trivial $G$-action.

If $X$ is a finite $G$-CW complex, and $\{X_k\}$ denotes the finite $G$-sets of $k$-cells, then the relation
$$[X] =\sum  (-1)^k [X_k] \in B(G)$$
follows immediately from the definition above.  Now this relation and formula (\ref{G set formula}) shows that  $\sigma (X)=\sigma (Y) \in K_0(\RG_G)$ whenever $\chi(X^H)=\chi(Y^H)$, for all subgroups $H\leq G$.

 The main
result of this section is the following improvement:

\begin{theorem}
\label{thm:sigma} Let $X$ and $Y$ be two $G$-CW-complexes such that
$\chi (X^H )=\chi(Y^H)$ for every $p$-hypoelementary subgroup $H$
in $G$, then $\sigma (X)=\sigma (Y) \in K_0(\RG_G)$.
\end{theorem}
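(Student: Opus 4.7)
The plan is to apply the detection statement of Corollary~\ref{cor: hypo detection} to reduce to showing that $\Res_H^G(\sigma(X)) = \Res_H^G(\sigma(Y))$ in $K_0(\RG_H)$ for every $p$-hypoelementary $H \leq G$, and then to handle each such $H$ by invoking the full-Burnside-ring version of the result already noted in the text.

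The first step is to verify naturality of $\sigma$ under restriction. The covariant functor $F\colon \G_H \to \G_G$ inducing $\Res_H^G$ sends $H/K$ to $G/K$, and precomposition carries the free generator $\uR{G/Q}$ to
\[
R\Mor_H\bigl(H/?,\, (G/Q)|_H\bigr) \;=\; \bigoplus_i \uR{H/K_i},
\]
where $(G/Q)|_H = \bigsqcup_i H/K_i$ is the orbit decomposition of $G/Q$ viewed as an $H$-set, and each $K_i \leq H$ is a $p$-subgroup (being contained in a conjugate of the $p$-subgroup $Q$). Thus $\Res_H^G$ is exact and preserves free and projective modules, so applying it to a chosen finite projective resolution $\PP^{(k)} \to \uR{X_k}$ yields a finite projective resolution of $\uR{(X|_H)_k}$. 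Together with formula~(\ref{G set formula}) this gives $\Res_H^G\sigma(X) = \sigma(X|_H)$ in $K_0(\RG_H)$, where $X|_H$ denotes $X$ regarded as an $H$-CW-complex.

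Next, I would use the standard closure property that every subgroup of a $p$-hypoelementary group is again $p$-hypoelementary: if $P \triangleleft H$ with $H/P$ cyclic of order prime to $p$ and $K \leq H$, then $K \cap P$ is a normal $p$-subgroup of $K$ and $K/(K\cap P) \cong KP/P$ embeds into the cyclic group $H/P$. Consequently, for every $H \in \cG_p^1$ the hypothesis yields $\chi(X^K) = \chi(Y^K)$ for \emph{all} subgroups $K \leq H$. The tom~Dieck description of $B(H)$ then gives $[X|_H] = [Y|_H]$ in $B(H)$, and by the remark preceding Theorem~\ref{thm:sigma} (combined with formula~(\ref{G set formula})) this implies $\sigma(X|_H) = \sigma(Y|_H)$ in $K_0(\RG_H)$.

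Combining the two steps, $\Res_H^G(\sigma(X) - \sigma(Y)) = 0$ in $K_0(\RG_H)$ for every $H \in \cG_p^1$, and Corollary~\ref{cor: hypo detection} finishes the argument. The main point that needs careful verification is the naturality identity $\Res_H^G\sigma(X) = \sigma(X|_H)$ in the first step — specifically, checking that the restriction functor with respect to the family of $p$-subgroups of $G$ takes values in the corresponding category for $H$ and preserves free generators; once this is settled, the subgroup-closure property of $\cG_p^1$ and the detection result are applied in an essentially formal way.
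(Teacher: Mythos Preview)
Your argument is correct and follows the same overall strategy as the paper: use the subgroup-closure property of $\cG_p^1$ to upgrade the hypothesis to equality of all fixed-point Euler characteristics over each $p$-hypoelementary $H$, and then invoke Corollary~\ref{cor: hypo detection} to conclude. The paper's proof differs only in its packaging: it first reduces to finite $G$-sets via formula~(\ref{G set formula}), phrases the hypothesis as $RX\cong RY$ via Conlon's theorem (Lemma~\ref{lem: Oliver}), and then for $p$-hypoelementary $H$ deduces $X|_H\cong Y|_H$ as $H$-sets, hence $\Res_H^G(\uR{X})\cong\Res_H^G(\uR{Y})$ as free $\RG_H$-modules. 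Your route through tom~Dieck's description of $B(H)$ and the remark preceding the theorem is slightly more direct and avoids the detour through Conlon; on the other hand, the paper's formulation makes the stronger intermediate statement $\uR{X|_H}\cong\uR{Y|_H}$ explicit, which it later contrasts with Example~\ref{ex:counter-example}. Your careful verification that $\Res_H^G$ preserves free $\RG$-modules (so that $\Res_H^G\sigma(X)=\sigma(X|_H)$) is a point the paper uses without comment.
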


As an application, we have a useful embedding result:
\begin{corollary}\label{cor: Oliver} Let $X$ be a finite $G$-CW complex
with the property that $\chi (X^H )=1$ for every $H \in\mathcal{G}
_p ^1 $. Then  there exists a finite $G$-CW complex $Y$ including
$X$ as a subcomplex such that
\begin{enumerate}
\item $Y \backslash X$ only has cells with prime power stabilizers.
\item $Y^K$ is mod $p$ acyclic for every $p$-subgroup $K$.
\end{enumerate}
\end{corollary}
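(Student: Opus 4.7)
The plan is to build $Y$ by attaching $G$-cells with $p$-subgroup stabilizers to $X$ inductively, processing conjugacy classes of $p$-subgroups from largest to smallest, at each stage making the corresponding fixed-point set mod-$p$ acyclic without disturbing what was achieved earlier. First observe that every $p$-subgroup $K \leq G$ belongs to $\cG_p^1$ (take $P = K$ with trivial cyclic quotient), so the hypothesis yields $\chi(X^K) = 1$ for every $p$-subgroup $K$. Applying Theorem \ref{thm:sigma} to $X$ and the one-point space shows $\sigma(X) = [\un{R}]$ in $K_0(\RG_G)$, confirming there is no global finiteness obstruction to the construction.

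Order conjugacy-class representatives $K_1, \ldots, K_r$ for the $p$-subgroups of $G$ with $|K_1| \geq \cdots \geq |K_r| = 1$, and set $Y_0 = X$. For each $i = 1, \ldots, r$, build $Y_i$ from $Y_{i-1}$ by attaching finitely many cells of the form $G/K_i \times D^n$ so that $(Y_i)^{K_j}$ is mod-$p$ acyclic for all $j \leq i$. Two geometric facts drive the induction: first, $(G/K_i)^L = \emptyset$ unless $L$ is sub-conjugate to $K_i$, so these attachments leave $(Y_{i-1})^{K_j}$ undisturbed for $j < i$ (since $|K_j| \geq |K_i|$ rules out proper sub-conjugacy); second, the $K_i$-fixed points of $G/K_i$ form the free $W_G(K_i)$-orbit $W_G(K_i) = N_G(K_i)/K_i$, so the new cells contribute $W_G(K_i)$-freely to $(Y_{i-1})^{K_i}$.

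The inductive step reduces to the following topological problem: given the finite CW complex $Z = (Y_{i-1})^{K_i}$ with $\chi(Z) = 1$, attach finitely many $W_G(K_i)$-free cell orbits to $Z$ to render it mod-$p$ acyclic. The condition $\chi(Z) = 1$ is maintained along the induction because each earlier step produced a mod-$p$ acyclic fixed-point set (hence $\chi = 1$) and subsequent attachments left it untouched. Working upwards from low dimensions, we kill reduced mod-$p$ homology classes in $Z$ by attaching cells of one higher dimension; paired attachments (an $n$-cell killing an $(n-1)$-class, together with an $(n+1)$-cell killing the residual $n$-class it introduces) preserve the Euler characteristic. Since $Z$ is finite-dimensional and, at each stage, $\widetilde{H}_*(Z;\cy{p})$ is finitely generated over $\cy{p}[W_G(K_i)]$, the process terminates after finitely many orbit attachments.

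The principal difficulty is justifying that this cell-attachment procedure succeeds using only $W_G(K_i)$-free orbits. Because mod-$p$ acyclicity is required as a purely topological (not equivariant) condition, any class in $\widetilde{H}_*(Z;\cy{p})$ is bounded by a single attached cell, and the forced $W_G(K_i)$-orbit structure of the attachment merely kills the whole $\cy{p}[W_G(K_i)]$-submodule generated by that class — which is exactly what we need, since killing $\widetilde{H}_*(Z;\cy{p})$ as an $\cy{p}$-vector space also kills it as an $\cy{p}[W_G(K_i)]$-module. The Euler-characteristic condition $\chi(Z) = 1$, ultimately traceable to Theorem \ref{thm:sigma}, ensures the running count of attached cells is compatible with the mod-$p$ acyclic target, so no numerical obstruction appears at any stage, and $Y = Y_r$ satisfies both conclusions (i) and (ii).
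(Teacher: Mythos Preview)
Your overall strategy—processing the $p$-subgroups from largest to smallest and attaching $G/K_i$-cells to make each fixed set mod-$p$ acyclic in turn—is the classical geometric approach (essentially Oliver's), whereas the paper works entirely inside the orbit category: it attaches cells with $p$-group isotropy until $\uC{X}$ is $(n-1)$-connected with a single top homology module $M$, then uses $\widetilde\sigma(X)=\widetilde\sigma(pt)$ (from Theorem~\ref{thm:sigma}) to deduce $\widetilde\chi(M)=0$ in $\widetilde K_0(\RG_G)$, hence that $M$ has a finite \emph{free} $\RG_G$-resolution, which is then realized by further cell attachments.

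The genuine gap in your argument is the termination claim in the inductive step. You assert that, given $Z=(Y_{i-1})^{K_i}$ with $\chi(Z)=1$, one can make $Z$ mod-$p$ acyclic by attaching finitely many free $W_G(K_i)$-orbits, justified by ``$Z$ is finite-dimensional and the homology is finitely generated, so the process terminates.'' But each round of attachments raises the dimension and creates new homology one degree up: if you attach a free $W$-orbit of $(n{+}1)$-cells to kill a class $\alpha\in H_n$ that generates, say, the trivial module $\cy{p}$, the kernel of $\cy{p}[W]\twoheadrightarrow\cy{p}$ is the augmentation ideal, contributing $|W|-1$ new classes to $H_{n+1}$, not one. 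Iterating this is exactly building a free $\cy{p}[W]$-resolution of the top homology module, and such resolutions are typically infinite when $p$ divides $|W|$. The condition $\chi(Z)=1$ is necessary but far from sufficient; Smith theory already shows the obstruction: if $P\leq W$ is a nontrivial $p$-subgroup and $Z'\supset Z$ is mod-$p$ acyclic with $Z'\setminus Z$ $W$-free, then $(Z')^{P}=Z^{P}$ must already be mod-$p$ acyclic. In your induction this \emph{does} hold (it is what earlier steps arranged), but you never use it—and it is precisely this extra structure that forces the top syzygy to be stably free. Your opening invocation of $\sigma(X)=[\un R]$ is the right ingredient, but it must be fed into the inductive step (as the paper does via $\widetilde\chi(M)=0$) to establish finiteness; without that, ``the process terminates'' is unproved.
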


\begin{proof}
Let $R=\cy{p}$. By Theorem \ref{thm:sigma}, $\sigma
(X)=\sigma (pt)$.  By attaching orbits of  cells with stabilizers $Q\in
\cF$, we can also assume that the chain complex $\CC:=\uC{X}$ of the
$G$-CW-complex $X$ is $n$-dimensional, $(n-1)$-connected for $n$ large,  and has a single nontrivial homology $H_n(\CC) =M$ in positive dimensions. This process does not change the finiteness obstruction, so we have $\widetilde\sigma
(X)=\widetilde\sigma (pt)$. 

Since $H_0(\uC{X}) = \un{R}$ has a finite projective resolution, the exact sequence
$$0 \to M \to  C_{n} \to C_{n-1} \to \dots \to C_0 \to \un{R} \to 0$$
implies that $\Ext_{\RG_G}^k(M, N) = 0$, for  all $\RG_G$-modules $N$, if $k$ is sufficiently large. Hence
the $\RG_G$-module $M$ also has a
finite projective resolution and we let $\chi(M) \in K_0(\RG_G)$ denote the Euler characteristic of any such resolution, as in the proof of Lemma \ref{lem: green ring}. But $\sigma(X) = (-1)^n\chi(M) + \chi(\un{R})$, by \cite[11.9]{lueck3}, and $\sigma(pt) = \chi(\un{R})$. Hence the relation $\widetilde\sigma(X) =\widetilde \sigma(pt)$ implies that $\widetilde\chi (M)=0 \in \widetilde K_0(\RG_G)$,
 implying that $M$ has a finite free resolution over $\RG_G$.  This
shows that we can add more cells with stabilizers $Q\in \cF$ to
kill the remaining homology on $X$ and obtain a mod $p$ acyclic
complex satisfying the above properties.
\end{proof}

Before giving the proof of Theorem \ref{thm:sigma}, we need some preparation.
Recall that there is map called the
\emph{linearization map} from $B(G)$ to the Green ring $A(RG)$.
The linearization
map $$\Lin \colon B(G) \to A(RG)$$ is defined as the linear extension of the
assignment $[X] \to [RX]$ where $RX$ denotes the permutation module
with basis given by  a finite $G$-set $X$. The linearization map is determined as follows:

\begin{lemma}[Conlon]\label{lem: Oliver}
For a $G$-CW complex $X$, the class $\Lin _R ([X])=0$ if and only if
$\chi (X^H)=0$ for every subgroup $H \in  \cG_p^1$.
\end{lemma}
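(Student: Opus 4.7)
The plan is to combine classical Burnside ring mark theory with Dress's detection theorem for $A(RG)$. For each subgroup $H \leq G$ I would introduce the ``fixed-point rank'' function
\[
\dim^H \colon A(RG) \to \bZ, \qquad \dim^H(M) = \rk_R(M^H),
\]
which is well-defined and additive under direct sum. For a finite $G$-set $Y$ one has $\dim^H(RY) = |Y^H|$, so using $[X] = \sum_k (-1)^k [X_k] \in B(G)$ one obtains $\dim^H(\Lin_R[X]) = \sum_k (-1)^k |X_k^H| = \chi(X^H)$ for any finite $G$-CW complex $X$. The forward implication of the lemma is then immediate: if $\Lin_R[X] = 0$ then $\chi(X^H) = 0$ for \emph{every} subgroup $H$, in particular for $H \in \cG_p^1$.

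For the converse, suppose $\chi(X^H) = 0$ for every $H \in \cG_p^1$. I would invoke Dress's classical detection theorem \cite[Theorem 7]{dress1}; combined with the $\bZ$-torsion-freeness of $A(RG)$ recorded at the start of Section \ref{sect: Dress}, this gives an integral injection $A(RG) \hookrightarrow \bigoplus_{[H]} A(RH)$ indexed by the $G$-conjugacy classes of $p$-hypoelementary subgroups. By naturality, $\Res_H^G \circ \Lin_R = \Lin_R \circ \Res_H^G$, so it suffices to prove that $\Res_H^G[X]$ vanishes after linearization in $A(RH)$ for each $H \in \cG_p^1$. I would in fact establish the stronger statement that $\Res_H^G [X] = 0$ already in $B(H)$.

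The key combinatorial observation is that every subgroup $K$ of a $p$-hypoelementary group $H = P \rtimes C$ is itself $p$-hypoelementary: $K \cap P$ is a normal $p$-subgroup of $K$, and $K/(K \cap P)$ embeds into $C$, which is cyclic of order prime to $p$. Consequently the hypothesis $\chi(X^K) = 0$ holds for \emph{every} subgroup $K \leq H$. Since $B(H)$ embeds into $\prod_{[K]} \bZ$ via the mark maps $[Y] \mapsto \chi(Y^K)$, we conclude $\Res_H^G [X] = 0$ in $B(H)$, and applying $\Lin_R$ finishes the argument. The only subtlety I anticipate is ensuring that Dress detection is used integrally rather than merely rationally, but this comes for free from the torsion-freeness of $A(RG)$.
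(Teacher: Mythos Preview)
Your converse (``if'') direction is correct and matches the paper's approach: Dress detection for $A(RG)$ together with $\bZ$-torsion-freeness, plus the observation that subgroups of $p$-hypoelementary groups are $p$-hypoelementary, reduces to the mark injectivity of $B(H)$.

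The forward (``only if'') direction, however, has a genuine error. For a finite $G$-set $Y$ the $H$-invariants $(RY)^H$ are spanned by the $H$-orbit sums, so
\[
\dim^H(RY)=\rk_R\bigl((RY)^H\bigr)=|Y/H|,
\]
not $|Y^H|$. With the correct formula your computation yields $\dim^H\bigl(\Lin_R[X]\bigr)=\chi(X/H)$ rather than $\chi(X^H)$, and the argument collapses. In fact the conclusion you draw is too strong: it is \emph{not} true that $\Lin_R[X]=0$ forces $\chi(X^H)=0$ for every subgroup $H$. Example~\ref{ex:counter-example} (with $G=S_3$, $R=\cy 2$) exhibits $G$-sets $X,Y$ with $RX\cong RY$ but $|X^G|\neq|Y^G|$; here $G$ is not $2$-hypoelementary, so this is consistent with the lemma but contradicts your claim.

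The paper's route for this direction is to restrict to each $H\in\cG_p^1$ and use Conlon's result that the linearization map $B(H)\to A(RH)$ is \emph{injective} for $p$-hypoelementary $H$: from $\Lin_R[X]=0$ one gets $\Lin_R\bigl(\Res_H^G[X]\bigr)=0$ in $A(RH)$, hence $\Res_H^G[X]=0$ in $B(H)$, hence $\chi(X^H)=0$. Note that this injectivity is a nontrivial input (it fails for general $H$, as the example shows), and there is no way to extract $\chi(X^H)$ from $\Lin_R[X]$ by a simple rank-type homomorphism on $A(RG)$.
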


\begin{proof} This is due to Conlon (see \cite{conlon1},
or \cite[Theorem 3.5.5]{bouc6}). The ``if" direction is a special case of  \cite[Theorem 7]{dress1}. The ``only if" direction is the statement that the linearization map $B(H) \to A(RH)$ is injective for all $H \in \cG_p^1$ (this also holds for $R=\bZ$ by \cite[Prop.~9.6]{dress1}).
\end{proof}

Note that to prove Theorem \ref{thm:sigma}, it is enough to
prove it for $G$-sets $X$ and $Y$ satisfying the property that
$|X^H|=|Y^H|$ for all $H \in \cG_p^1$. 
By Conlon's theorem,
two such $G$-sets will then have isomorphic permutation modules $RX \cong
RY$. 

\begin{remark}
If $Q\triangleleft H$, for some $p$-subgroup $Q$, then $H/Q \in \cG_p^1$ if and only if $H \in \cG_p^1$. 
We may apply this remark to the $N_G(Q)/Q$-sets $X^Q$ and $Y^Q$. By Conlon's Theorem, the permutation modules $R[X^Q]$ and $R[Y^Q]$ will be isomorphic
as $R[N_G (Q)/Q]$-modules, for every $p$-subgroup $Q$, since 
$|(X^Q)^{H/Q}| = |X^H|$ for all $H/Q \leq N_G(Q)/Q$ with $H/Q \in \cG_P^1$.
\end{remark}

\begin{proof}[The proof of Theorem \ref{thm:sigma}]
We are considering modules over the orbit category $\G_G$ relative to the family
 $\cF$ of all $p$-subgroups in $G$. If $X$ and $Y$ are finite $G$-sets such that
 $RX \cong RY$ as $RG$-modules, then we wish to show that $\sigma(X) = \sigma(Y)$. The argument will proceed in the following two steps:
 \begin{enumerate}
 \item If $G$ is $p$-hypoelementary, and $RX \cong RY$ as $RG$-modules, then we will show that $\uR{X} \cong \uR{Y}$ as $\RG_G$-modules. 
  \item We reduce to $p$-hypoelementary groups by applying Corollary \ref{cor: hypo detection}.
 \end{enumerate}
 
 To establish step (i) we now assume that $G \in \cG_p^1$. Since any  subgroup  of a $p$-hypoelementary group is also $p$-hypoelementary,  we see that $|X^H| = |Y^H|$ for all  $H \leq G$ by Lemma \ref{lem: Oliver}. This shows that $X \cong Y$ as  $G$-sets, and finishes step (i).
 
 For any finite group $G$,  we conclude by step (i) that $\Res^G_H(\uR{X}) \cong \Res^G_H(\uR{Y})$, for all $H \in \cG_p^1$, and therefore $\Res^G_H(\sigma(X)) = \Res^G_H(\sigma(Y))$, for all $H \in \cG_p^1$. By Corollary \ref{cor: hypo detection}, we have  $\sigma(X) = \sigma(Y) \in  K_0(\RG_G)$.
\end{proof}

We remark that step (i) above only holds if $G$ is $p$-hypoelementary.  In general, given two $G$-sets $X$ and $Y$ such that $RX
\cong RY$ as $RG$-modules, we can not conclude that $\uR{X} \cong \uR{Y}$
as $\RG_G$-modules, even though $R[X^Q] \cong R[Y^Q]$ as
$R[N_G(Q)/Q]$-modules for every $Q \in \cF$.  In other words, the Dress detection result (Corollary \ref{cor: hypo detection}) does not extend to $A(\RG_G)$.  Here is an explicit example.

\begin{example}\label{ex:counter-example}
Let $G=S_3$, $R=\cy{2}$ and $\cF$ be the family of all $2$-subgroups
in $G$. Let $X=[G/1]+2[G/G]$ and $Y=2[G/C_2]+[G/C_3]$. Except for $G$,
all subgroups of $G$ are $2$-hypoelementary. It is easy to see that
$|X^K|=|Y^K|$ for all $K\leq G$ and $K\neq G$. So, $RX \cong RY$ as
$RG$-modules and $\sigma(X) = \sigma(Y)$ by Theorem \ref{thm:sigma}. Note that the modules $\uR{G/1}$, $\uR{G/C_2}$, and
$\uR{G/C_3}$ are all projective as $\RG_G$-modules, but $\uR{G/G}$ is
not since $G$ does not have a normal Sylow $2$-subgroup (see \cite[Lemma 2.5]{symonds2}). Therefore, we can
not have an isomorphism $\uR{X} \cong \uR{Y}$, otherwise $\uR{G/G}$
would be projective.
\end{example}

As an application of Theorem \ref{thm:sigma}, we will prove the
following theorem of Oliver which is the key result in
\cite{oliver0}.

\begin{theorem}[{Oliver \cite[Theorem 1]{oliver0}}]\label{Oliver's theorem}
Let $G$ be a finite group not of $p$-power order, and $\varphi$ a
mod $p$ resolving function for $G$. Then for any finite complex $F$
with $\chi (F)=1+\varphi (G)$, $F$ is the fixed-point set of an
action of $G$ on some finite $\cy{p} $-acyclic complex.
\end{theorem}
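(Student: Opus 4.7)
The plan is to apply Corollary \ref{cor: Oliver} to a carefully constructed $G$-CW complex $X$. The first step is to build a finite $G$-CW complex $X$ satisfying
\begin{enumerate}
\item[(a)] $X^G = F$; and
\item[(b)] $\chi(X^H) = 1$ for every $H \in \cG_p^1$.
\end{enumerate}
I would begin with $F$ carrying the trivial $G$-action, and then attach $G$-equivariant cells of the form $G/K \times D^n$ with $K < G$ a proper subgroup. Since $K \neq G$, such attachments leave $X^G = F$ unchanged, while changing $\chi(X^H)$ by $\pm |(G/K)^H|$ for each other subgroup $H$. The axioms of a mod $p$ resolving function $\varphi$---constancy on conjugacy classes, vanishing on $p$-subgroups and on $p$-hypoelementary subgroups, and the compatibility relations dictated by the Burnside ring---encode precisely the numerical data required to choose orbit types and cell dimensions so that $\chi(X^H) = 1 + \varphi(H)$ for every $H \leq G$. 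Since $\varphi$ vanishes on $\cG_p^1$, requirement (b) follows; and since $G$ is not of $p$-power order, the hypothesis $\chi(F) = 1 + \varphi(G)$ is exactly the constraint needed to make things consistent at $H = G$.

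With $X$ in hand, Corollary \ref{cor: Oliver} produces a finite $G$-CW complex $Y \supset X$ such that $Y \setminus X$ consists of cells with $p$-power stabilizers and $Y^K$ is mod $p$ acyclic for every $p$-subgroup $K \leq G$. Taking $K = 1$ shows that $Y$ itself is mod $p$ acyclic. Moreover, because $G$ is not a $p$-group, none of the cells in $Y \setminus X$ have stabilizer equal to $G$, so $Y^G = X^G = F$, and $Y$ is the desired complex.

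The main obstacle is the construction of $X$ in step (a). This amounts to realizing the integer-valued function $H \mapsto 1 + \varphi(H)$ (on conjugacy classes of subgroups) as the system of fixed-set Euler characteristics of a finite $G$-CW complex whose $G$-fixed stratum is the prescribed complex $F$. Via the mark homomorphism, the Burnside ring $B(G)$ embeds into $\prod_{(H)} \bZ$ subject to congruences indexed by $G$-conjugacy classes of subgroups, and the defining axioms of a mod $p$ resolving function are precisely those needed to place $1+\varphi$ in the image of this embedding. Once this linear-algebraic step is carried out, an inductive cell-attachment procedure---proceeding from small to large orbit type and adjusting the parity of cell dimensions to produce the correct signs---yields a complex $X$ with the required fixed-set behaviour, completing the proof.
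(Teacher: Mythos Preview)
Your overall strategy matches the paper's: build a finite $G$-CW complex $X$ with $X^G = F$ and $\chi(X^H)=1$ for all $H\in\cG_p^1$, then invoke Corollary~\ref{cor: Oliver} and use that $G$ is not a $p$-group to conclude $Y^G = X^G = F$. That second half is fine.

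The gap is in your construction of $X$. You assert that $\varphi$ vanishes on $p$-hypoelementary subgroups and that the axioms place $1+\varphi$ in the image of the mark homomorphism. Neither is what the definition says. Axiom (ii) of a mod $p$ resolving function is
\[
\sum_{K\leq L}\varphi(L)=0\quad\text{for every }K\in\cG_p^1,
\]
which is the statement that the \emph{M\"obius-inverted} function $f:=\theta^{-1}(\varphi)$, given by $f(K)=\sum_{K\leq L}\varphi(L)$, vanishes on $\cG_p^1$; it does not say $\varphi(K)=0$. Likewise axiom (i), that $|W_G(K)|\mid\varphi(K)$, is exactly the congruence that puts $f$ (not $\varphi$) in the image of $\rho$, since $\theta$ identifies $\eta$ with $\rho$ in the diagram preceding Lemma~\ref{lem: super}. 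In general $1+\varphi$ is \emph{not} in the image of the mark homomorphism, so your target function $H\mapsto 1+\varphi(H)$ is typically not realizable as a system of fixed-set Euler characteristics at all.

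The fix is exactly Lemma~\ref{lem: super}: set $f=\theta^{-1}(\varphi)$; then $1+f$ lies in the image of $\rho$, $f(H)=0$ for $H\in\cG_p^1$, and $f(G)=\varphi(G)$ so $\chi(F)=1+f(G)$. Now realize $1+f$ by a finite $G$-CW complex. The paper does this cleanly: take any $X_1$ with $\chi(X_1^K)=1+f(K)$ (existence follows from tom Dieck's description of $B(G)$), excise an equivariant regular neighbourhood of $X_1^G$, and disjoint-union with $F$; the result has the same fixed-set Euler characteristics as $X_1$ and satisfies $X^G=F$. Your ``start from $F$ and attach proper orbits'' approach also works once applied to the correct target function $1+f$, since $f(G)=\varphi(G)$ guarantees consistency at $H=G$.
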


A mod $p$ resolving function is defined by Oliver in the following way:

\begin{definition} A {\em mod $p$ resolving function} for $G$ is a
super class function $ \varphi$ satisfying the following properties:
\begin{enumerate}
\item $|N_G (K)/K |$ divides $\varphi (K) $ for all $K \leq G$.
\item For any $K\leq G$ such that $K \in \mathcal{G} _p ^1 $, we have $\sum _{K
\leq L } \varphi (L) =0$.
\end{enumerate}
\end{definition}

We will give alternate description of mod $p$ resolving functions. Note that there is a commutative diagram
$$\xymatrix{0 \ar[r]&  B(G) \ar[r]^{\rho} \ar@{=}[d] &  C(G) \ar[r]^{\psi}
\ar@<2pt>[d]^{\theta} & \Obs(G) \ar[r] \ar@{=}[d]  & 0 \\ 0 \ar[r] &
B(G) \ar[r]^{\eta} & C (G) \ar[r]^{\gamma}
\ar@<2pt>[u]^{\theta^{-1}} & \Obs(G) \ar[r] & 0 }$$ where $B(G)$
denotes the Burnside ring of finite $G$-sets, $C(G)$ denote the
group of super class function, and $$\Obs (G)=\bigoplus _{K \leq _G
G} \bZ / |W_G (K)| \bZ. $$ The maps in the diagram are defined as
follows: the map $\rho$ is the \emph{mark} homomorphism
\cite{dress3} defined by $\rho (G/K)(L)=|(G/K)^L |$, and $\eta$ is
defined by $ \eta ([G/K]) (L)=|W_G (K)|$ if $K$ and $L$ are
conjugate to each other, and zero otherwise. The homomorphism
$\gamma$ is defined as the direct sum of the mod $|W_G (K)|$ reductions, and $\psi= \gamma\circ \theta$. The map
$\theta$ is an invertible transformation such that
$$ \theta (f) (K) =\sum _{K\leq L } \mu (K,L) f(L) \quad \quad {\rm and}
\quad \quad  \theta ^{-1} (f) (K) =\sum _{K \leq L } f(L).$$ Here
$\mu (K,L)$ denotes the M\" obius function for the  poset of
subgroups of $G$. More details about this diagram can be found in
\cite{coskun-yalcin1}. We have the following observation:

\begin{lemma}\label{lem: super}
 A super class function $\varphi$ is a mod $p$
resolving function if and only if $\theta ^{-1} (\varphi ) $ is in
the image of $\rho$ and $\theta ^{-1} (\varphi ) (K)=0$ for all $K
\in  \cG_p^1 $.
\end{lemma}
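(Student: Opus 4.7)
The plan is to unpack the two conditions in the definition of a mod $p$ resolving function and match them one at a time to the two conditions in the statement of the lemma, using only the explicit formulas for $\theta^{-1}$ and $\eta$ together with the commutativity of the diagram.

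Set $g=\theta^{-1}(\varphi)$. The explicit formula $\theta^{-1}(f)(K)=\sum_{K\leq L} f(L)$ gives $g(K)=\sum_{K\leq L}\varphi(L)$, so the vanishing statement $g(K)=0$ for all $K\in \cG_p^1$ is literally condition~(ii) of the definition of a mod $p$ resolving function. So the second half of the ``if and only if'' is immediate from the definition of $\theta^{-1}$.

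It therefore remains to show that condition~(i), namely $|W_G(K)|\mid \varphi(K)$ for every $K\leq G$, is equivalent to $g\in\Image(\rho)$. Commutativity of the left square yields $\eta=\theta\circ\rho$, so $g\in\Image(\rho)$ if and only if $\varphi=\theta(g)\in\Image(\eta)$. From the explicit description $\eta([G/K])(L)=|W_G(K)|$ when $K$ and $L$ are $G$-conjugate, and zero otherwise, one sees directly that $\Image(\eta)\subseteq C(G)$ is exactly the subgroup of super class functions $\varphi$ satisfying $|W_G(K)|\mid \varphi(K)$ for every $K\leq G$: the forward inclusion is clear from the formula, and the converse is obtained by choosing one representative $K$ from each $G$-conjugacy class and taking $\varphi(K)/|W_G(K)|$ as the coefficient of $[G/K]$ in a preimage. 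This identifies condition~(i) with $g\in\Image(\rho)$, completing the equivalence.

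There is no real obstacle: once the commutative diagram and the explicit formulas for $\theta$, $\theta^{-1}$, $\rho$, and $\eta$ are in hand (as recalled in \cite{coskun-yalcin1}), the proof is just a matter of matching definitions. The only mildly non-trivial point is the characterization of $\Image(\eta)$, which is a one-line calculation from the definition of $\eta$.
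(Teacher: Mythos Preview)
Your argument is correct and is exactly the intended one: the paper's own proof is the single sentence ``This follows from the above commuting diagram and from the definition of mod $p$ resolving functions,'' and you have simply spelled out what that sentence means, matching condition~(ii) to the formula for $\theta^{-1}$ and condition~(i) to the characterization of $\Image(\eta)$ via $\eta=\theta\circ\rho$.
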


\begin{proof} This follows form the above commuting diagram and
from the definition of mod $p$ resolving functions.
\end{proof}
\begin{remark} Given $F$, and any group $G$ not of $p$-power order, Oliver concludes in \cite[Corollary, p.~162]{oliver0} that there exists a finite $\cy{p}$-acyclic $G$-CW complex with $X^G=F$ if and only if $$\chi(F) \equiv 1 \text{\ mod\ }m_p(G),$$
 where $m_p(G)$ is the greatest common divisor of the integers 
$\{ \varphi(G)\}$ over all mod $p$ resolving functions for $G$.
The existence of mod $p$
resolving functions for $G$ is completely analyzed by Oliver  in \cite[Theorem 4]{oliver0}, which gives the explicit characterization: (i) $m_p(G)=0$  if and only if $G \in \cG_p^1$, (ii) $m_p=1$ if $G\notin \cG_p$, and (iii) $m_p$ is the product of the distinct primes $q$ such that $G \in \cG_p^q$, $q >1$. 
\end{remark}

\begin{proof}[The proof of Theorem \ref{Oliver's
theorem}]
Let $\varphi$ be  a mod $p$ resolving function $\varphi$, and $F$ a finite complex such that $\chi (F)=1+\varphi(G)$. Then by Lemma \ref{lem: super}, the super class function $f=\theta ^{-1} (\varphi)$ in the image of $\rho$, and $\varphi(G) = f(G)$ so that $\chi (F)=1+f(G)$. Notice that $1+f$ is also in the image of  $\rho \colon B(G) \to C(G)$, since $\rho([G/G])(K) = 1$ for all $K\leq G$. 

From tom Dieck's description  \cite[p.~239]{tomDieck1} of $B(G)$, there 
exists a finite $G$-CW complex $X$ with the properties:
\begin{enumerate}
\item $\chi (X^K)=1+f(K)$ for
every $K \leq G$,
\item $\chi(X^H) = 1$ for all $H \in \cG_p^1$, and
\item $X^G=F$
\end{enumerate} 
Property (ii) follows from the definition of $\theta$, since $\varphi = \theta(f)$ is a mod $p$ resolving function.
To obtain property (iii),  start with any finite $G$-CW complex $X_1$ satisfying property (i) and let $X_0 = X_1 \setminus U$, where $U$ is an open
$G$-invariant regular neighbourhood of $X_1^G$, 
obtained by an equivariant triangulation
of $X_1$. Let $X = X_0
\disjointunion F$. Note that $X_0$ and therefore $X$ has the
structure of a finite $G$-CW complex.  Then
$\chi(X_0^K) = \chi(X_1^K) - \chi(X_1^G)$, for all $K \leq G$. It
follows that $\chi(X^K) = \chi(X_1^K)$, for all $K\leq G$, and hence
$[X] = [X_1]\in B(G)$.

By Corollary \ref{cor: Oliver}, there  exists a
mod $p$-acyclic $G$-CW complex $Y$, containing $X$ as a subcomplex,
such that $Y \backslash X$ only has cells with prime power
stabilizers. Since $G$ is not of $p$-power order, it follows that $Y^G=X^G=F$, and this completes the proof.
\end{proof}

\begin{remark}
Oliver \cite[\S 3]{oliver0} also determined which finite complexes $F$ appear as the fixed-point set $X^G$, for finite contractible $G$-CW complexes $X$. 
An \emph{integral resolving function for $G$} is a super class function  in $C(G)$ which is a mod $p$ resolving function for all primes $p$. The set of integral resolving functions forms a group, and $m(G)$ is defined as the greatest common divisor of the integers $\varphi(G)$ over all integral resolving functions. 

Assume that $G$ is not of prime power order. Given an integral resolving function $\varphi$ for $G$, and a non-empty finite complex $F$ such that $\chi(F) = 1 + \varphi(G)$, there exists a finite $G$-CW complex $X$ such that $\chi(X^H) = 1$, for all $H \in \cG_p^1$ and all primes $p$, and with $X^G = F$ (as in the proof of Theorem \ref{Oliver's
theorem} above).

Let $\G_G$ denote the orbit category of $G$ with respect to the family $\cF$ of all $p$ subgroups, for all primes $p$. By attaching orbits of  cells with stabilizers $Q\in
\cF$, we can also assume that the chain complex $\CC:=\uC{X}$ of the
$G$-CW-complex $X$ is $n$-dimensional, $(n-1)$-connected for $n$ large, with $H_0(\CC) = \un{\bZ}$ and has a single nontrivial homology $H_n(\CC) =M$ in positive dimensions.

For each prime $p$, the homology modules $H_i(\CC)\otimes \Zphat$ admit finite projective resolutions over $\Zphat\G_G$, so by \cite[Prop.~3.11]{hpy1} the homology modules $H_i(\CC)$  admit finite projective resolutions over $\ZG_G$. Therefore,   the finiteness obstruction $\sigma(X)$ is defined, and 
$$\widetilde\sigma(X) = (-1)^n[M] + [\un{\bZ}]\in \widetilde K_0(\ZG_G)$$ 
by \cite[11.9]{lueck3}. 
We call $X$ a $G$-\emph{resolution of  $F$}, and define $\gamma_G(F,X) := \widetilde\sigma(X)$, following Oliver \cite[\S 3]{oliver0}. Then define
$$\gamma_G(F) \in  \widetilde K_0(\ZG_G)/\cB(G)$$
 to be the image of $\gamma_G(X,F)$, for any  $G$-resolution $X$ of $F$, where
$$\cB(G) = \{ \gamma_G(pt, X)\vv X \text{\ is a $G$-resolution of\ }F=pt\}.$$
Then $\gamma_G(F)$ is well-defined, as in \cite[Prop.~5]{oliver0}. If $\chi(F) =1$, and $X$ is a $G$-resolution for $F$, then $X/F$ is a $G$-resolution for $(X/F)^G = pt$, and hence $\gamma_G(F) = 0$ whenever $\chi(F) =1$. It follows as in \cite[Theorem 3]{oliver0} that $\chi(F_1) = \chi(F_2)$ implies $\gamma_G(F_1) = \gamma_G(F_2)$. Since
$\gamma_G(F_1 \vee F_2) = \gamma_G(F_1) + \gamma_G(F_2)$, we also have the conclusion of \cite[Corollary 5]{oliver0}. Let $n_G$ denote the greatest common divisor of the integers $\{\chi(F) -1\}$ as $F$ varies over all finite complexes with
$\chi(F) \equiv 1 \Mod m(G)$ and $\gamma_G(F) = 0$. Then $F$ is the fixed point set of a finite contractible $G$-CW complex if and only if $\chi(F) \equiv 1 \Mod n_G$.

 It might be interesting to continue the study of $\cB(G)$ over the orbit category, in analogy with Oliver \cite{oliver4}.
\end{remark}

\section{Acyclic permutation complexes}
\label{sect: Acyclic}

Let $G$ be a discrete group. We say that $X$ is a $G$-complex if $X$
is a CW-complex with a $G$-action on it in a such a way that $G$
permutes the cells in $X$ and if $G$ fixes a cell, then it fixes it
pointwise. Note that a $G$-CW-complex is a $G$-complex and
conversely, every $G$-complex has a $G$-CW-complex structure. For
$G$-complexes, we have the following theorem of tom Dieck (Chapter
II, Proposition 2.7 in \cite{tomDieck2}):

\begin{theorem} If $G$ is a discrete group and $f\colon  X \to Y$ is a
$G$-map between $G$-CW-complexes which induces homotopy equivalences
$X^H \to Y^H $ between the $H$-fixed subspaces for all subgroups $H
\leq G$, then $f$ is itself a $G$-homotopy equivalence.
\end{theorem}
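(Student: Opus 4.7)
The plan is to follow the standard proof strategy for equivariant Whitehead-type theorems: construct a $G$-homotopy inverse to $f$ by cellular induction, reducing the equivariant extension problem at each cell to an ordinary extension problem on the corresponding fixed-point set. First, factor $f = p \circ i$, where $i \colon X \hookrightarrow M_f$ is the inclusion into the $G$-mapping cylinder $M_f = (X \times [0,1]) \sqcup_f Y$ and $p \colon M_f \to Y$ is the obvious $G$-deformation retraction; since $p$ is already a $G$-homotopy equivalence, it suffices to produce a strong $G$-deformation retraction $M_f \to X$. On $H$-fixed points one has $(M_f)^H = M_{f^H}$, and because $f^H$ is a homotopy equivalence of CW complexes, $X^H \hookrightarrow (M_f)^H$ is a homotopy equivalence of CW pairs; equivalently, the relative homotopy groups $\pi_n((M_f)^H, X^H)$ vanish for every subgroup $H \leq G$ and every $n \geq 0$.

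The core construction is a $G$-retraction $r \colon M_f \to X$ together with a $G$-homotopy $h \colon \mathrm{id}_{M_f} \simeq i \circ r$ rel $X$, built by induction over the relative $G$-cells of $(M_f, X)$. A typical cell of the form $G/H \times D^n$ is attached along a $G$-map $\varphi \colon G/H \times S^{n-1} \to M_f^{(k-1)}$, which by equivariance is determined by an $H$-invariant map $\varphi_0 \colon S^{n-1} \to (M_f^{(k-1)})^H$. Extending $r$ equivariantly over $G/H \times D^n$ is therefore equivalent to extending the composition $r \circ \varphi_0 \colon S^{n-1} \to X^H$ over $D^n$, in a way compatible with the partial homotopy $h$ on the boundary. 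The vanishing of $\pi_n((M_f)^H, X^H)$ provides such an extension up to homotopy, and the $G$-homotopy extension property for $G$-CW-pairs (which reduces to the non-equivariant HEP cell by cell, since each $G/H \times D^n$ is an ordinary product) converts this into a strict extension.

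The main obstacle is the coherent bookkeeping of homotopies during the inductive step: after adjusting $r$ on a new cell, one must simultaneously extend the homotopy $h$ without disturbing its restriction to previously handled cells, and one must do so compatibly with the $G$-action. This is the familiar subtlety in Whitehead's original argument, and I would handle it by packaging the inductive data as a single map out of the relative mapping cylinder of the inclusion of skeleta, then applying the $G$-HEP once at each stage rather than separately for $r$ and $h$. Once $r$ and $h$ are defined on all of $M_f$, the composite $r \circ p^{-1} \colon Y \to X$ (where $p^{-1}$ denotes any $G$-homotopy inverse of $p$, for example the canonical inclusion $Y \hookrightarrow M_f$) supplies the desired $G$-homotopy inverse to $f$, completing the proof.
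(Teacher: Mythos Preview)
Your sketch is a correct outline of the standard proof of the equivariant Whitehead theorem: replace $f$ by the inclusion into its $G$-mapping cylinder, use that $(M_f)^H = M_{f^H}$ so the hypothesis forces $\pi_*((M_f)^H, X^H)=0$ for all $H$, and then build a $G$-deformation retraction cell by cell, each new $G$-cell $G/H \times D^n$ contributing an ordinary obstruction in $\pi_n((M_f)^H, X^H)$ that vanishes by hypothesis. The use of the $G$-HEP to reconcile the partial retraction and the partial homotopy at each stage is exactly the right device.

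However, there is nothing to compare: the paper does not prove this theorem. It is quoted as a result of tom Dieck with a reference to his book (Chapter~II, Proposition~2.7 of \emph{Transformation Groups}), and the paper then passes to its algebraic analogue due to Kropholler--Wall. Your argument is essentially the proof one finds in tom Dieck's book, so in that sense it matches the source the paper cites, but the paper itself offers no argument of its own for this statement.
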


Recently, Kropholler and Wall \cite{kropholler-wall1} gave an
algebraic version of this theorem. To introduce their theorem, we
need to give more definitions.

Let $R$ be a commutative ring and $X$ be a $G$-set. As usual, we
denote by $RX$, the \emph{based} $RG$-permutation module with basis $X$ where $G$
acts by permuting the basis. An $RG$-module homomorphism $f \colon  RX \to
RY$ between two based permutation modules is called {\it admissible} if it
carries the submodule $R[X^H]$ into $R[Y^H]$ for all subgroups $H
\leq G$. A chain complex of based $RG$-permutation modules $$ \CC \colon  \cdots
\to R[X_n] \to R[X_{n-1}] \to \cdots \to R[X_1] \to R[X_0] \to 0 $$
is called a {\it special $G$-complex} if all the boundary maps are
admissible. A chain map $f \colon  \CC \to \DD$ between special
$G$-complexes is a called an admissible $G$-map if for each $i$, the
map $f_i \colon  C_i \to D_i$ is an admissible map.

\begin{theorem}[Kropholler-Wall \cite{kropholler-wall1}]\label{Krop-Wall theorem}
Let $f \colon  \CC \to \DD$ be an admissible $G$-map between special
$G$-complexes.  If $f$ induces a chain homotopy equivalence between
the $H$-fixed subcomplexes for all subgroups $H \leq G$, then $f$ is
itself a chain homotopy equivalence.
\end{theorem}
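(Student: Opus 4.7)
The plan is to lift the entire problem to chain complexes over the orbit category $\G_G = \OrG$, now taken with respect to the family $\cF$ of \emph{all} subgroups of $G$, and then apply the standard Whitehead lemma for chain complexes of projectives. For any $G$-set $X = \sqcup_i G/H_i$, the based permutation module $R[X]$ is the value at $G/1$ of the free $R\G_G$-module $\uR{X} = \bigoplus_i \uR{G/H_i}$, whose value at $G/K$ is $R[X^K]$. By Yoneda's lemma,
\[
\Hom_{R\G_G}(\uR{G/H},\, \uR{G/K}) \;=\; \uR{G/K}(G/H) \;=\; R[(G/K)^H],
\]
which is also precisely $\Hom_{RG}(R[G/H], R[G/K]) = R[G/K]^H$. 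More generally, $R\G_G$-morphisms $\uR{X} \to \uR{Y}$ are in natural bijection with admissible $RG$-homomorphisms $R[X] \to R[Y]$. Hence each special $G$-complex $\CC$ lifts uniquely to a non-negatively graded chain complex $\un{\CC}$ of free $R\G_G$-modules with $\un{\CC}(G/H) = \CC^H$, admissible boundary maps become $R\G_G$-morphisms, and the given admissible chain map $f$ lifts to a chain map $\un{f} \colon \un{\CC} \to \un{\DD}$ of $R\G_G$-chain complexes.

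Next, the hypothesis that $f^H\colon \CC^H \to \DD^H$ is a chain homotopy equivalence for every $H \leq G$ says exactly that $\un{f}$ induces a chain homotopy equivalence at every object of $\G_G$. In particular $\un{f}$ is a quasi-isomorphism of $R\G_G$-chain complexes, since homology is computed objectwise. Because $\un{\CC}$ and $\un{\DD}$ are non-negatively graded chain complexes of projective $R\G_G$-modules, the standard Whitehead-type argument---construct a chain homotopy inverse one degree at a time using projective lifts---produces a chain map $\un{g} \colon \un{\DD} \to \un{\CC}$ together with $R\G_G$-chain homotopies witnessing $\un{f}\,\un{g} \simeq \mathrm{id}$ and $\un{g}\,\un{f} \simeq \mathrm{id}$.

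Finally, evaluation at the object $G/1$ is an exact functor from $R\G_G$-modules to $RG$-modules; it returns the original complexes $\CC$, $\DD$ and map $f$, and carries $\un{g}$ and the two homotopies to an $RG$-chain map $g \colon \DD \to \CC$ together with $RG$-chain homotopies showing that $f$ is a chain homotopy equivalence of $RG$-module chain complexes. The main point requiring care is the dictionary in the first paragraph: verifying that admissibility of a map of based permutation modules is \emph{exactly} the data of a natural transformation over the orbit category. Once that identification is in place, the remainder of the argument is routine homological algebra in the abelian category of $R\G_G$-modules, which has enough projectives, and the theorem of Kropholler-Wall is just the objectwise-evaluation shadow of the classical Whitehead lemma applied in this enlarged category.
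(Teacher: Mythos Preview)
Your approach is exactly the paper's: lift to free chain complexes over $\Or G$ (family of all subgroups), deduce an objectwise quasi-isomorphism from the hypothesis, apply the Whitehead lemma for bounded-below complexes of projectives, and evaluate at $G/1$.

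There is one slip in your dictionary. The Yoneda computation $\Hom_{R\G_G}(\uR{G/H},\uR{G/K}) = R[(G/K)^H]$ is correct, but this is \emph{not} all of $\Hom_{RG}(R[G/H],R[G/K]) = R[G/K]^H$: the span of the $H$-fixed cosets is in general a proper submodule of the $H$-invariants (take $G=\cy{2}$, $H=G$, $K=1$, where the former is $0$ and the latter is $R\cdot(1+g)$). The correct bijection---which you do state in the next sentence, and which the paper records just before the proof as an equivalence of categories---is between $R\G_G$-morphisms and \emph{admissible} $RG$-maps; cutting down from $R[G/K]^H$ to $R[(G/K)^H]$ is precisely what admissibility accomplishes. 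With that correction your argument is complete and coincides with the paper's.
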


Here by an $H$-fixed point complex, we mean the subcomplex
$$\to R[X_n ^H] \to R[X_{n-1}^H] \to \cdots \to
R[X_1 ^H ] \to R[X_0 ^H] \to 0.$$ It is clear that when $f \colon
\CC \to \DD$ is an admissible $G$-map, then for each $H \leq G$, it
induces a chain map between fixed point complexes.

Observe that a based permutation $RG$-module $R[X]$ can be considered as
a module over the orbit category in a natural way: let $\G_G = \Or G$
denote the orbit category over all subgroups in $G$. Associated to a
permutation $RG$-module $R[X]$ with an $R$-basis $X$, there is an
$\RG_G$-module $\uR{X}$ which is a free $\RG_G$-module. Note that if $f
\colon RX \to RY$ is admissible, then it induces an $R\G_G$-module map
$f \colon \uR{X} \to \uR{Y}$. Conversely, given a map between free
$R\G_G$-modules $f\colon \uR{X} \to \uR{Y}$, evaluation of $f$ at $1$
gives an admissible map $f(1)\colon  RX\to RY$. This gives a natural
equivalence between the following two categories:

\begin{enumerate}
\item The category of based $RG$-permutation modules and admissible maps.
\item The category of free $\RG_G$-modules and $\RG_G$-module maps.
\end{enumerate}

The equivalence of these categories gives an alternative proof for
Theorem \ref{Krop-Wall theorem} using the orbit category.

\begin{proof} Let $f\colon  \CC \to \DD$ be a admissible $G$-map between special
$G$-complexes. Under the natural equivalence explained above, we can
consider $f$ as a chain map between free chain complexes of
$\RG_G$-modules. The condition that $f$ induces homotopy equivalences
between the $H$-fixed subcomplexes for all $H \leq G$ gives that
$f(H) \colon  \CC (H) \to \DD (H)$ is an homotopy equivalence for
all $H \leq G$. This gives, in particular, that the induced map on
homology $ f_* (H) \colon  H_* (\CC (H) ) \to H_* (\DD (H) )$ is an
isomorphism for all $H \leq G$. But, $H_* (\CC (H))= H_* (\CC )
(H)$, so we get that $f_* \colon  H_* (\CC ) \to H_* (\DD )$ is an
isomorphism of $\RG_G$-modules. Now, by a standard theorem in
homological algebra, this implies that $f \colon  \CC \to \DD$ is a
chain homotopy equivalence as a chain map of $\RG_G$-modules.
Evaluating $f$ at $1$, we get the desired result.
\end{proof}

Our interpretation of the next result will use the following version of
Smith theory:

\begin{theorem}[{Symonds \cite[Corollary 4.5]{symonds3}}]\label{thm: symonds}
Let $G$ be a $p$-group, $\G_G=\Or G$, and $R=\Zphat$ denote the
$p$-adic integers. If $\CC$ is a chain complex of projectives over
$\RG_G$ that is bounded above, such that $\cy{p} \otimes _{\bZ} \CC
(1)$ is exact, then $\CC$ is split exact.
\end{theorem}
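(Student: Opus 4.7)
The plan is to combine the canonical decomposition of projective $\RG_G$-modules with a Smith-theoretic propagation argument. By \cite[10.34]{lueck3}, every projective $\RG_G$-module $P$ has a canonical splitting $P \cong \bigoplus_{[Q]} E_Q(S_Q(P))$, where $S_Q(P)$ is projective over $R[W_G(Q)]$ and $S_Q$, $E_Q$ are mutually inverse equivalences between the category of projective $\RG_G$-modules and the product $\bigoplus_{[Q]}$ of categories of projective $R[W_G(Q)]$-modules. Applied levelwise, this yields $\CC \cong \bigoplus_{[Q]} E_Q(S_Q(\CC))$ as chain complexes of projective $\RG_G$-modules, and therefore $\CC$ is split exact if and only if each $S_Q(\CC)$ is contractible as a complex of projective $R[W_G(Q)]$-modules.

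Since $G$ is a $p$-group, every Weyl group $W_G(Q) = N_G(Q)/Q$ is also a $p$-group, so projective $\Zphat[W_G(Q)]$-modules are free. A bounded above complex of free modules over the local ring $\Zphat$ is contractible if and only if its mod-$p$ reduction is exact (Nakayama applied inductively from the top degree: the top differential is injective mod $p$, hence injective, with free cokernel, and one peels off the top term and iterates). Thus the whole problem reduces to showing that $\cy{p}\otimes_{\bZ} S_Q(\CC)$ is exact for every subgroup $Q \leq G$.

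The heart of the matter is to transfer the mod-$p$ exactness from the single evaluation $\CC(1)$ to every evaluation $\CC(Q)$; once this is in hand, the defining short exact sequence
$$0 \to \CC(Q)_s \to \CC(Q) \to S_Q(\CC) \to 0$$
combined with a downward induction on the subgroup poset (starting from maximal subgroups, where $\CC(Q)_s = 0$), and the mod-$p$ homology long exact sequence, transfers exactness from the $\CC(Q)$'s to the $S_Q(\CC)$'s. For the propagation step itself, I would induct along a central series of $G$: given a central subgroup $\langle z\rangle$ of order $p$ in $G$, pass to the full subcategory of $\G_G$ on orbits $G/K$ with $z \in K$ (which is equivalent to the orbit category of $G/\langle z\rangle$), and run classical mod-$p$ Smith theory for $\langle z\rangle$ to upgrade exactness of $\cy{p}\otimes\CC(1)$ to exactness of $\cy{p}\otimes\CC(\langle z\rangle)$. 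Replacing $(G,\CC)$ by $(G/\langle z\rangle, \Res\CC)$ and iterating yields exactness of $\cy{p}\otimes\CC(Q)$ for every $Q$.

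The main obstacle is the Smith-theoretic propagation. The subtlety is that $\CC(\langle z\rangle)$ is not literally the $\langle z\rangle$-fixed subcomplex of $\CC(1)$: for a $G$-set $X$ and a central subgroup $\langle z\rangle$, the fixed point set $X^{\langle z\rangle}$ is genuinely smaller than the invariant subspace $R[X]^{\langle z\rangle} = R[X/\langle z\rangle]$ inside the permutation module. Carrying the fixed-point Smith argument through at the level of the orbit category therefore requires a careful identification of $\cy{p}\otimes\CC(\langle z\rangle)$ inside $\cy{p}\otimes\CC(1)$ via the permutation structure of projectives evaluated at each orbit, and this is precisely where the $p$-group hypothesis on $G$ and the combinatorics of the orbit category interact --- the content of Symonds' extended Smith theorem.
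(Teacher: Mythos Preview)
The paper does not give an argument here: its ``proof'' is the one-sentence remark that the statement is a slight generalization of Symonds' Corollary~4.5 and that the same argument applies, with a pointer to Bouc \cite{bouc8} for related results. Your proposal is therefore already more detailed than what the paper supplies, and it is a reasonable unpacking of what the Symonds-style argument looks like.

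Your outline is sound and in the spirit of that approach: decompose projectives via the $E_Q/S_Q$ splitting, reduce to bounded-above complexes of free $\Zphat[W_G(Q)]$-modules, and feed in a Smith-theoretic propagation of mod-$p$ exactness from $\CC(1)$ to every $\CC(Q)$. One simplification is available: once you know that $\cy{p}\otimes_{\bZ}\CC(Q)$ is exact for every $Q$, the downward induction through the short exact sequences $0 \to \CC(Q)_s \to \CC(Q) \to S_Q(\CC)\to 0$ is unnecessary. Each $\CC(Q)$ is itself a bounded-above complex of free $\Zphat$-modules (this is visible from the very decomposition $\CC(Q)=\bigoplus_{[K]} E_K(S_K\CC)(Q)$ you invoke), so mod-$p$ exactness forces genuine exactness of each $\CC(Q)$; hence $\CC$ is acyclic as an $\RG_G$-complex, and a bounded-above acyclic complex of projectives is automatically split exact.

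You are right that the substantive content is the Smith propagation step, and you correctly flag the subtlety distinguishing $\CC(\langle z\rangle)$ from the naive $\langle z\rangle$-invariants of $\CC(1)$. Since you ultimately defer this step to Symonds, and the paper does exactly the same, your proposal and the paper's treatment agree in substance; yours simply makes the skeleton of the argument visible.
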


\begin{proof} This is a slight generalization of Corollary 4.5 in Symonds
\cite{symonds3} and the proof follows easily from the argument given in \cite{symonds3} (see also Section 6 of 
Bouc \cite{bouc8} for similar results).
\end{proof}

In \cite{kropholler-wall1}, Kropholler and Wall also gave an
alternative proof for a theorem of Bouc \cite{bouc7} about acyclic
simplicial complexes (and extended the statement to special $G$-complexes). 
We will give a proof using the orbit category and Theorem \ref{thm: symonds}.
Recall that, a complex $\CC$ of $RG$ modules is called
acyclic if it has zero homology everywhere except at dimension zero
and $H_0 (\CC)=R$. Also note that a complex of $RG$-modules is
called $G$-split if it admits a chain contraction.

\begin{theorem}[Kropholler-Wall \cite{kropholler-wall1}]\label{Boucthm}
Let $G$ be a finite group and let $\CC$
be a finite dimensional special $\bZ G$-complex. If $\CC$ is
$\bZ$-acyclic, then the augmented chain complex $\widetilde \CC$ is
$G$-split.
\end{theorem}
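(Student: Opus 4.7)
The plan is to exploit the natural equivalence established at the beginning of this section: under the assignment $RX \leftrightarrow \uR{X}$, the augmented special $\bZ G$-complex $\widetilde\CC$ is identified with a bounded above chain complex of finitely generated free $\bZ\G_G$-modules, where $\G_G = \Or G$ is the orbit category over \emph{all} subgroups of $G$. I will establish the stronger statement that $\widetilde\CC$ is split exact over $\bZ\G_G$; evaluation at $G/1$ then produces a $\bZ G$-linear chain contraction of the original augmented complex, which is the $G$-split property.

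For a bounded above complex of projectives over $\bZ\G_G$, split exactness is equivalent to exactness as a $\bZ\G_G$-complex, and the latter may be tested object-wise. Hence it suffices to prove that $\widetilde\CC(G/H) = \widetilde\CC^H$ is exact as a complex of abelian groups for every subgroup $H \leq G$. Since $\widetilde\CC^H$ is a bounded complex of finitely generated free abelian groups, this is equivalent to $\widetilde\CC^H \otimes \Zphat$ being exact for every prime $p$.

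Fix $H \leq G$ and a prime $p$, and let $P$ be a Sylow $p$-subgroup of $H$. The restriction $\widetilde\CC|_P$ is a bounded above complex of free $\bZ\Or P$-modules, and its value at $P/1$ is $\widetilde\CC$, which is $\Fp$-acyclic since $\widetilde\CC$ is $\bZ$-acyclic. Theorem \ref{thm: symonds} then implies that $\widetilde\CC|_P \otimes \Zphat$ is split exact over $\Zphat\Or P$; in particular, evaluating at $P/P$, the complex $\widetilde\CC^P \otimes \Zphat$ is exact.

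To propagate exactness from $\widetilde\CC^P$ up to $\widetilde\CC^H$, define the averaged transfer
\[
T\colon \widetilde\CC^P \otimes \Zphat \longrightarrow \widetilde\CC^H \otimes \Zphat, \qquad T(x) = \frac{1}{[H:P]} \sum_{hP \in H/P} hx,
\]
which takes values in $\Zphat$ because $[H:P]$ is coprime to $p$. The usual verifications show that $T$ is well-defined (since $x$ is $P$-fixed, $hx$ depends only on $hP$), lands in $\widetilde\CC^H$ (by reindexing the sum), is a chain map (since $\partial$ is $G$-equivariant), and is a retraction of the inclusion $\widetilde\CC^H \hookrightarrow \widetilde\CC^P$. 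Thus $\widetilde\CC^H \otimes \Zphat$ is a chain-complex direct summand of the exact $\widetilde\CC^P \otimes \Zphat$, and is therefore exact. The main technical hurdle in this proof is bridging from $p$-subgroups, where Symonds' theorem applies directly, to an arbitrary subgroup $H$; this is precisely what the averaging idempotent accomplishes, using only the Sylow-theoretic fact that $[H:P]$ is coprime to $p$.
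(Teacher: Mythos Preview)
Your proof attempts to establish something strictly stronger than the theorem: that $\widetilde\CC$ is split exact over the full orbit category $\bZ\G_G$, i.e., that $\widetilde\CC(G/H)$ is exact for every subgroup $H\le G$. This stronger statement is \emph{false} in general. Oliver's constructions (see Theorem~\ref{Oliver's theorem} and the subsequent remark) produce finite contractible---hence $\bZ$-acyclic---$G$-CW complexes $X$ with $\chi(X^G)\ne 1$; the cellular chain complex of such an $X$ is a special $\bZ G$-complex which is $\bZ$-acyclic, yet $\widetilde\CC(G/G)$ is the augmented chain complex of the non-acyclic space $X^G$ and is therefore not exact.

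The error in your argument occurs at the transfer step. Your map $T(x)=\frac{1}{[H:P]}\sum_{hP}hx$ is indeed $H$-invariant ``by reindexing the sum,'' but $H$-invariance places $T(x)$ in $\bZ[X]^H$, the $H$-fixed submodule of the permutation module---\emph{not} in $\bZ[X^H]$, the free module on the $H$-fixed points of the $G$-set, which is what $\widetilde\CC(G/H)$ actually is. Concretely, for $x\in X^P\setminus X^H$ one computes $T(x)=|Hx|^{-1}\sum_{y\in Hx}y$, a combination of basis elements none of which lie in $X^H$; hence $T$ does not land in $\widetilde\CC(G/H)$. No alternative retraction can rescue the argument, precisely because the conclusion you are aiming for is false. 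The paper's proof instead targets only the $G$-split conclusion: it reduces to Sylow $p$-subgroups via detection of $\Ext^1_{\bZ G}$-classes, applies Symonds' theorem over $\Or P$, and then evaluates at $P/1$ (rather than at $P/P$) to obtain a $\Zphat P$-linear chain contraction.
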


\begin{proof} The augmented chain complex
$$\widetilde \CC \colon  \ 0 \to C_n \to \cdots \to C_1 \to C_0 \to \bZ \to 0$$
is an exact sequence of $\bZ G$-permutation modules. To show that
$\CC$ is $G$-split, we need to show that the short exact sequences
$$0 \to Z_i \to C_i \to Z_{i-1} \to 0$$ in $\widetilde \CC$
are all split exact sequences of $\bZ G$-modules. Since all the
modules involved are free over $\bZ$, the extension classes $\Ext ^1
_{\bZ G} (Z_{i-1}, Z_i )$ are detected by restriction to the Ext-groups $\Ext ^1 _{\bZ
_p P} (Z_{i-1}, Z_i )$ where $P$ is a Sylow $p$-subgroup of $G$. So,
it is enough to assume that $G=P$ is a $p$-group and show that
$\bZ_p \otimes _{\bZ} \widetilde \CC$ is split.

As before, we can consider the complex $\widetilde \CC$ as a complex
of $\bZ \G_G$-modules where $\G_G =\Or G$. This is a chain complex of
the form
$$\DD : 0 \to \uZ{X_n} \to \cdots \to \uZ{X_1} \to \uZ{X_0}
\to \uZ{G/G} \to 0,$$ where all the modules are free $\bZ
\G_G$-modules. Evaluation of $\DD$ at 1 gives the augmented complex
$\widetilde \CC$. Since $\CC$ is acyclic, the complex $\cy{p}
\otimes _{\bZ} \widetilde \CC  =\cy{p} \otimes _{\bZ} \DD(1)$ is
exact by universal coefficient theorem. So, by Theorem \ref{thm:
symonds}, we obtain that $\Zphat \otimes _{\bZ} \DD$ is split exact,
hence its evaluation at 1, which is the complex $\Zphat \otimes
_{\bZ} \widetilde \CC $, is also split exact.
\end{proof}

\begin{remark} Kropholler and Wall \cite[\S 5]{kropholler-wall1} observed that
Oliver's results on fixed point sets of finite contractible $G$-CW complexes  combined with Theorem \ref{Boucthm} imply a Dress induction statement. Here is a variant of that observation: let $F$ be a finite complex with $\chi(F) = 1 + m_p(G)$, for some prime $p$. Then there is a finite mod $p$ acyclic $G$-CW complex $X$ with $X^G=F$. By the mod $p$ version of Theorem \ref{Boucthm},  the augmented chain complex $\DD$ of $\uC{X}$, for $R = \cy p$, is split over the orbit category and hence its evaluation $\widetilde \CC = \DD(G/1)$ is $G$-split. This gives the relation 
that $m_p(G)\cdot [R]$ is a linear combination in $A(RG)$ of permutation modules
$R[G/H]$, with $H<G$ proper subgroups. This suggests that $m_p(G)$ should be the optimal denominator in the Dress rational hypoelementary induction Theorem
\cite[Theorem 7]{dress1}. 

It should also be pointed out that this implication is circular, since the proof of Oliver's results involves directly or indirectly the same ingredients as Dress's theorem.
\end{remark}

Some other nice applications of Theorem \ref{Boucthm} are given in
\cite{kropholler-wall1}. One of them extends a result of Floyd \cite[Theorem 2.12]{floyd1}.

\begin{theorem} [Theorem 6.1, \cite{kropholler-wall1}]
Let $G$ be a locally finite group and let $X$ be a finite
dimensional acyclic $G$-CW complex. Then, the complex $X/G$ is acyclic.
\end{theorem}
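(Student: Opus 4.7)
The plan is to reduce to the finite-subgroup case settled by Theorem \ref{Boucthm}. First observe that, since $G$ permutes the cells of $X$ and fixes stabilized cells pointwise, the quotient $X/G$ has a natural CW structure with $i$-cells in bijection with the $G$-orbits of $i$-cells in $X$; at the chain level this gives
\[
C_*(X/G;\bZ) \;=\; \bZ \otimes_{\bZ G} C_*(X;\bZ).
\]
Thus it suffices to show that the augmented coinvariant complex $\bZ \otimes_{\bZ G} \widetilde C_*(X)$ is exact as a complex of abelian groups.

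Next, I would write $G = \bigcup_\alpha G_\alpha$ as a directed union of finite subgroups, using that $G$ is locally finite. For any $\bZ G$-module $M$, taking coinvariants commutes with this directed union:
\[
\bZ \otimes_{\bZ G} M \;=\; M \Big/ \sum_{g \in G}(g-1)M \;=\; \varinjlim_\alpha \Bigl( M \Big/ \sum_{g \in G_\alpha}(g-1)M \Bigr) \;=\; \varinjlim_\alpha \bigl( \bZ \otimes_{\bZ G_\alpha} M \bigr).
\]
Applying this termwise to $\widetilde C_*(X)$ and using that filtered colimits of chain complexes of abelian groups are exact, it then suffices to prove that $\bZ \otimes_{\bZ G_\alpha} \widetilde C_*(X)$ is exact for every finite subgroup $G_\alpha$.

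Now fix a finite subgroup $H = G_\alpha$. Restricted to $\bZ H$, the complex $\widetilde C_*(X)$ remains a finite dimensional special $\bZ H$-complex: each $C_i(X)$ is a permutation $\bZ H$-module with $H$-set basis given by the $i$-cells of $X$, and admissibility of the boundary maps with respect to every subgroup $K \leq G$ certainly implies admissibility with respect to every $K \leq H$. Since $X$ is acyclic, $\widetilde C_*(X)$ is $\bZ$-acyclic, so Theorem \ref{Boucthm} applies and yields that $\widetilde C_*(X)$ is $H$-split. Applying the additive functor $\bZ \otimes_{\bZ H} -$ preserves the splitting, producing the required split exact sequence of abelian groups.

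The main conceptual point, rather than a hard obstacle, is that Theorem \ref{Boucthm} only gives $G_\alpha$-splittings and not a global $G$-splitting of $\widetilde C_*(X)$; but this is exactly enough, because we only need exactness after coinvariants, and the colimit step promotes the finite-subgroup statements to the global one. The residual work is routine bookkeeping — checking that the permutation-and-admissibility structure transfers under restriction from $G$ to $G_\alpha$, and that coinvariants commute with the directed union $\bigcup_\alpha G_\alpha = G$.
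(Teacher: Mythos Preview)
Your proof is correct and follows essentially the same approach as the paper: reduce to finite subgroups using that $G$ is a directed union of them, apply Theorem \ref{Boucthm} to obtain an $H$-splitting of $\widetilde C_*(X)$ for each finite $H\le G$, and then pass to coinvariants. You have simply made explicit the colimit argument that the paper summarizes in one sentence (``it is enough to do the case where $G$ is finite''), by writing $\bZ\otimes_{\bZ G}(-)=\varinjlim_\alpha \bZ\otimes_{\bZ G_\alpha}(-)$ and invoking exactness of filtered colimits.
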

\begin{proof} We outline the steps of the argument given in  \cite{kropholler-wall1}. Since a locally finite group is the directed union of its finite subgroups, it is enough to do the case where $G$ is finite. Then $C(X; \bZ)$ is $\bZ G$-split, implying that the chain complex $C(X; \bZ)\otimes_{\bZ G} \bZ$ is also acyclic by Theorem \ref{Boucthm}. But this complex is isomorphic to the chain complex of $X/G$.
\end{proof}

\providecommand{\bysame}{\leavevmode\hbox to3em{\hrulefill}\thinspace}
\providecommand{\MR}{\relax\ifhmode\unskip\space\fi MR }
\providecommand{\MRhref}[2]{%
  \href{http://www.ams.org/mathscinet-getitem?mr=#1}{#2}
}
\providecommand{\href}[2]{#2}

\end{document}